\documentclass[12pt,reqno]{amsart}
\usepackage{amsmath,amssymb}
\usepackage[dvips]{graphicx,color}
\usepackage{latexsym,amssymb}
\usepackage{mathrsfs}
\usepackage[abbrev]{amsrefs}

\newtheorem{thm}{Theorem}[section]

\newtheorem{prop}[thm]{Proposition}
\newtheorem{lem}[thm]{Lemma}

 \newenvironment{pf}
    {{\noindent \bf Proof. }}{\hfill $\Box$}

\numberwithin{equation}{section}
\numberwithin{thm}{section}

\begin{document}

\begin{center}\large \bf 
Besov space approach to the Navier-Stokes equations 
with the Neumann boundary condition in bounded domains
\end{center}

\footnote[0]
{
{\it Mathematics Subject Classification}: 76D05; 35Q30

{\it 
Keywords}: 
Navier-Stokes equations, 
bounded domain, 
Neumann condition

E-mail: t-iwabuchi@tohoku.ac.jp~(T. Iwabuchi), 
kozono@waseda.jp~(H. Kozono). 

}

\begin{center}

Tsukasa Iwabuchi$^{\rm a}$, and Hideo Kozono$^{\rm b,c}$

\vskip2mm

$^{\rm a}$ 
Mathematical Institute, 
Tohoku University 
Sendai 980-8578 Japan

$^{\rm b}$ 
Department of Mathematics, Waseda University, 169-8555 Tokyo, Japan

$^{\rm c}$ 
Mathematical Research Center for Co-creative Society, Tohoku University, 980-8578 Sendai, Japan

\end{center}

\vskip5mm

\begin{center}
\begin{minipage}{135mm}
\footnotesize
{\sc Abstract. } 
Based on the analysis by Iwabuchi-Matsuyama-Taniguchi \cite{IMT-2019}, 
\cite{IMT-2018}, we first introduce our framework of Besov spaces  
$\dot B^s_{p, q}$ for $s \in {\mathbb R}$ and $1 \le p, q \le \infty$ 
on the bounded domain $\Omega \subset {\mathbb R}^d$ with 
smooth boundary $\partial \Omega$ in terms 
of the Stokes operator $A=A_2$ with the Neumann boundary condition 
on $\partial\Omega$ 
in  $L^2_{\sigma}(\Omega)$. 
Under some geometric assumption on $\Omega$, we establish $L^p-L^q$ type estimates of the semi-group 
$\{e^{-tA}\}_{t \ge 0}$ in $\dot B^s_{p, q}$ and prove a local well-posedness 
of the Navier-Stokes equations with the initial data in $\dot B^{-1+\frac dp }_{p, q}$ 
for $d < p < \infty$ and $1 \le q \le \infty$.  Since $d < p$, we have 
$L^{d, \infty} \subset \dot B^{-1+\frac dp }_{p, \infty}$ 
so that our space for well-posedness is larger than any other previous one 
in bounded domains.        
\end{minipage}
\end{center}

\section*{Introduction}

Let $\Omega$ be a bounded domain in 
$\mathbb R^d$ ($d \geq 2$) with smooth boundary $\partial\Omega$.  
We assume that the $d-1$-dimensional Betti number of $\Omega$ is zero.  
We consider the incompressible Navier-Stokes equations 
in $\Omega$ with the Neumann boundary condition on $\partial\Omega$; 
\begin{equation}\label{NS}
\begin{cases}
\displaystyle 
 \partial_t u - \Delta u
  + (u \cdot \nabla ) u  + \nabla \pi = 0, 
  & t > 0 , x \in \Omega, 
  \\
{\rm div }\, u = 0 , 
  & t> 0 , x \in \Omega , 
  \\
  \sigma(\delta , \nu)u =0, \quad
  \sigma(\delta, \nu)du=0 ,
  & t > 0 , x \in \partial\Omega , 
\\
u(0,x) = u_0,  
  & x \in \Omega, 
\end{cases}
\end{equation}
where $\nu$ is the unit outer normal vector. 
In (\ref{NS}), $\sigma(\delta, \nu)$ denotes the value at $\nu$ of the 
principal symbol $\delta = d^\ast$ with $d$ denoting the exterior differentiation 
acting on differential forms on $\Omega$.  
In the case when $d=3$, this means that 
\[
u \cdot \nu = 0 , \quad {\rm rot \,} u \times \nu = 0 \quad \text{ on } \partial \Omega . 
\]
Notice that the unknown vector field $u=u(x, t) = (u_1(x, t), \cdots, u_d(x, t))$ 
may be regarded as the 1-form $u = \sum_{j=1}^d u_j(x, t)dx_j$ 
on $x=(x_1, \cdots, x_d) \in \Omega$ and $t >0$, and that 
$\pi= \pi(x, t)$ is the unknown scalar function denoting the pressure, 
while $u_0=u_0(x) = (u_{0, 1}(x), \cdots, u_{0, d}(x))$ is the given initial 
velocity vector field.  
\par
The purpose of this paper is to show well-posedness of (\ref{NS}) in a larger 
space than $L^d(\Omega)$.  In the case when $\Omega = \mathbb R^d$ there 
are a number of literature on well-posedness.  
As a pioneer work, Leray \cite{Le-1934} constructed a global weak solutions 
$u \in L^\infty(0, T;L^2_\sigma(\mathbb R^3)) \cap L^2(0, T; H^1(\mathbb R^3))$ 
of (\ref{NS}) for every $u_0 \in L^2_{\sigma}(\mathbb R^3)$, 
where $T>0$ is taken arbitrarily large.   
Later on,  by Serrin \cite{Se-1962,Se-1963}, Prodi \cite{Pr-1959}, Masuda \cite{Mas-1984},  
Kozono-Sohr \cite{KoSo-1997,KoSo-1996} and Escauriaza-Seregin-Sver\'ak \cite{EsSeSh-2003},   
uniqueness and regularity of such a weak solution $u$ are shown 
provided $u \in L^s(0, T; L^p(\mathbb R^d))$ for $2/s + d/p = 1$ with 
$d\le p \le \infty$.  
Such a class $L^s(0, T; L^p(\mathbb R^d))$ is scaling invariant for (\ref{NS}), 
and in particular the marginal case $s=\infty$ and $p=d$ has been fully investigated by many authors.   Indeed, Kato \cite{Ka-1984}, Cannone-Planchon \cite{CaPl-1996}, Kozono-Yamazaki \cite{KoYam-1994} and Koch-Tartar \cite{KoTa-2001}  
proved local well-posedness in $L^d(\mathbb R^d)$, $\dot B^{-1+ \frac dp}_{p, q}(\mathbb R^d)$, $M^{-1+ \frac dp}_{p, q}(\mathbb R^d)$ and $BMO^{-1} = \dot F^{-1}_{\infty, 2}(\mathbb R^n)$, where the symbol  $M$ and $F$ denote 
the Morrey and the Triebel-Lizorkin spaces, respectively 
(see also \cite{IwNa-2013}).   
The case when $p=\infty$, Bourgain-Pavlovic \cite{BouPa-2008}, Yoneda \cite{Yo-2010} 
and Wang \cite{Wa-2015} proved ill-poseness in $\dot B^{-1}_{\infty, q}(\mathbb R^d)$ 
for all $1 \le q \le \infty$.  
For the proof of well and ill-posedness, the techniques such as 
paraproduct formula, Littlewood-Paley decomposition, norm-inflation, etc. 
arising from Besov spaces play an essential role in the special case 
when $\Omega = \mathbb R^d$.  
\par
In the case when $\Omega$ is a bounded domian, 
there are less results on well-posedness of (\ref{NS}) 
in comparison with that in $\mathbb R^d$.  
Fujita-Kato \cite{FuKa-1964} treated homogeneous Dirichelt condition, 
i.e., $u|_{\partial \Omega}=0$ and proved local well-poseness in 
the 3D case in the domain  
$D(A_2^{1/4})$ of the fractional power $A_2^{1/4}$ of the Stokes operator 
$A_2$ in $L^2_\sigma(\Omega)$.  
Giga-Miyakawa \cite{GiMi-1985} had been successful to generalize the result of 
\cite{FuKa-1964} in the $d$-dimensional case and proved local well-posedness in 
$L^d_\sigma(\Omega)$.  Up to the present, their result on $L^d(\Omega)$ is 
the largest space which guarantees local well-posedness.  
\par
In the paper we consider the Navier-Stokes equations in bounded domain $\Omega 
\subset \mathbb R^d$ with the Neumann condition on  $\partial \Omega$. 
Under a certain geometric assumption on $\Omega$, we first define the 
Besov space $B^s_{p, q}(\Omega)$ by means of the Stokes operator $A_2$ 
on $L^2_\sigma(\Omega)$ with the Neumann boundary condition. 
Our method is based on the procedure of the Dirichlet 
Laplacian given by the series of papers 
\cite {Iw-2018}, \cite{Iw-2023}, \cite{IMT-2021}, 
\cite{IMT-2019} and \cite {IMT-2018}.
We first construct a partition $\{\phi_j(\sqrt{A_2})\}_{j\in \mathbb Z}$ 
of unity to define the Littlewood-Paley decomposition in $L^2(\Omega)$ associated with $A_2$.  
Since $A_2$ is a positive self-adjoint operator in $L^2_\sigma(\Omega)$, 
we need to extend $\{\phi_j(\sqrt{A_2})\}_{j \in \mathbb Z}$ to a family of  bounded operators
in $L^p_\sigma(\Omega)$ for all $1 \leq p \leq \infty$.  
To this end, 
the pointwise estimate of the kernel function of the semi-group 
$\{e^{-tA_2}\}_{t>0}$ in terms of the Gaussian plays an essential role. 
Since the Laplacian acting on 1-forms with the Neumann condition on 
$\partial\Omega$ commutes with 
the Helmholtz-Weyl projection $\mathbb P$, we are successful to establish such a 
pointwise estimate so that the uniform $L^p$-bound of 
$\{\phi_j(\sqrt{A_2})\}_{j \in \mathbb Z}$ is obtained 
in the same way as the case of the Dirichlet Laplacian. 
We next show the $L^p$-$L^q$-type estimates of $\{e^{-tA}\}_{t>0}$ in 
$\dot B^s_{p, q}$ and the bilinear estimate in the scaling space 
$\dot B^{-1+ \frac np}_{p, \infty}$ with $d < p < \infty$ so that 
(\ref{NS}) is locally  well-posed in such a space.   
Since $d < p < \infty$ it holds $L^{d, \infty}(\Omega) \subset \dot B^{-1+ \frac np}_{p, \infty}$, which means that our result on well-posedness 
is lager than that of Miyakawa \cite {Miy-1980} and Giga-Miyakawa \cite{GiMi-1985}.  
\par
\bigskip
This paper is organized as follows. 
In the next section we give a precise definition of the space 
$\dot B^s_{p, q}$ and then state our main theorem,  
Section 2 is devoted to several properties of the Stokes operator $A$
with the Neumann condition on $\partial \Omega$ in $B^s_{p, q}(\Omega)$.  
In particular, we establish  $L^p-L^q$-type estimates of $\{e^{-tA}\}_{t>0}$ in 
such Besov spaces. Finally in Section~3  we prove our main theorem.

\par
\bigskip 
\section{Result}
We impose the following assumption on $\Omega$. 
\par
{\bf Assumption.} 
$$
X_{\mbox{\tiny har}}(\Omega)\equiv \{u \in C^\infty(\bar \Omega); 
\delta u =0, \, \, du= 0, \,\,\, 
\sigma(\delta, \nu)u =0 \quad\mbox{on $\partial\Omega$} \} 
= \{0\}.    
$$  
\par
In the case $d=2, 3$  the Assumption states $\Omega$ is simply 
connected (see \cite{KoYa-2009}).

We will define the Stokes operator $A$ on a distribution spaces denoted by 
$\mathcal Z'_\sigma$ 
and introduce the Besov spaces associated with the Stokes operator 
in subsection~\ref{subsec:Besov}. 
Let $\phi $ satisfy that 
\[  \phi \in C_0^\infty (\mathbb R), \quad 
{\rm supp \,} \phi \in [2^{-1},2] \quad
0 \leq \phi \leq 1 , \quad 
\sum_{j \in \mathbb Z} \phi (2^{-j}\lambda ) = 1, \quad \lambda >0. 
\]
We introduce $\{ \phi_j \}_{j \in \mathbb Z} $ such that for every $j \in \mathbb Z$
\[
\phi_j(\lambda) := \phi(2^{-j} \lambda), \quad \lambda \in \mathbb R, 
\]
which is a partition of the unity by dyadic numbers 
on the half line $(0,\infty)$. 
Formally, the norm  is written by 
\[
\| u \|_{\dot B^s_{p,q}} 
:= \Big\| \Big\{ 2^{js} \| \phi_j(\sqrt{A}) u \|_{L^p} \Big\}_{j \in \mathbb Z} \Big\|_{\ell^q (\mathbb Z)} ,
\]
where $\phi_j (\sqrt{A}) $ is defined in subsection~\ref{subsec:StokesOp}.

\vskip3mm 

\noindent {\bf Definition. }
Let $X$ be a Banach space, $T>0$ and $u_0 \in X$. 
We say $u$ a solution to \eqref{NS} in $X$ on $(0,T)$ if 
$u \in L^\infty (0,T ; X)$ 
and if $u$ satisfies the following integral equation; 
\[
u(t) = e^{tA}u_0 - \int _0^t e^{-(t-\tau)A} 
  \mathbb P \nabla \cdot (u \otimes u)  ~{\rm d}\tau 
  \quad \text{ in } X , 
\]
for almost every $t \in (0,T)$.

\begin{thm}\label{thm}
Let $d \geq 2$ and $d < p < \infty$. 
\begin{enumerate}
\item {\rm(}Local solutions{\rm)} 
Let $1 \leq q < \infty$ and $u_0 \in \dot B^{-1+\frac{d}{p}}_{p,q}$. 
Then there exist $T > 0$ and a solution $u$ to \eqref{NS} 
in $\dot B^{-1+\frac{d}{p}}_{p,q}$ on $(0,T)$ such that 
\[
u \in C([0,T], \dot B^{-1+\frac{d}{p}}_{p,q}), \quad 
\sup _{ t \in (0,T)}t^{\frac{1}{2}(1-\frac{d}{p})}  
  \| u(t) \|_{L^p} < \infty .   
\]

\item {\rm(}Local solutions{\rm)}  Let $q = \infty$. 
There exists  a positive constant $\delta _0 $ such that 
if $u_0 \in \dot B^{-1+\frac{d}{p}}_{p,\infty} $ satisfies 
\[
\limsup_{j \to \infty} 2^{(-1+\frac{d}{p})j} 
 \| \phi_j (\sqrt{A}) u_0 \|_{L^p} \leq \delta _0, 
\]
then there exist $T>0$ and  a unique solution $u$ to 
\eqref{NS} in $\dot B^{-1+\frac{d}{p}}_{p,\infty}$ on $(0,T)$ satisfying 
\[
\sup_{t \in (0,T)} t^{\frac{1}{2}(1-\frac{d}{p})} 
\| u(t) \|_{L^p} \leq C \delta _0 ,
\]
where $C = C(d,p)$ is independent of $u_0$. 
The continuity in time holds in the dual weak sense that 
 for every $\tilde t \in [0,T]$, 
  $u(\tilde t)$ is an element of $\dot B^{-1+\frac{d}{p}}_{p,\infty}$ 
 and 
\[
\lim_{t \to \tilde t}
\sum_{j \in \mathbb Z}\int_{\Omega} 
\Big( \phi_j(\sqrt{A})\big( u(t) - u(\tilde t)\big)  
\Big) 
\overline{\Phi_j(\sqrt{A})g} ~\mathrm{d}x
= 0 , \quad 
\text{ for any } g \in \dot B^{1-\frac{d}{p}}_{p',1},
\]
where $1/ p + 1/p' = 1$. 

\item {\rm(}Global solutions{\rm)} 
In the case when $1 \leq q < \infty$, 
there is a positive constant $\delta _ 0 $ such that if 
$u_0 \in \dot B^{-1+\frac{d}{p}}_{p,q} \leq \delta _ 0$, 
then there exists a unique solution $u$ to \eqref{NS} in 
$\dot B^{-1+\frac{d}{p}}_{p,\infty}$ on $(0,\infty)$ such that 
\[
u \in C([0,\infty), \dot B^{-1+\frac{d}{p}}_{p,q}) 
\quad \text{ with } 
\sup _{t>0} t^{\frac{1}{2}(1-\frac{d}{p})} \| u(t) \|_{L^p} < \infty .
\]
In the case when $q = \infty$, there is a positive constant $\delta _ 0$ 
such that if 
$u_0 \in \dot B^{-1+\frac{d}{p}}_{p,\infty}$ and 
$\| u_0 \|_{\dot B^{-1+\frac{d}{p}}_{p,\infty}} \leq \delta _0$, 
then there exists a unique solution $u$ to
 \eqref{NS} in 
$\dot B^{-1+\frac{d}{p}}_{p,\infty}$ on $(0,\infty)$ satisfying 
\[
\sup_{t > 0} t^{\frac{1}{2}(1-\frac{d}{p})} \| u(t) \|_{L^p} \leq C\delta_0,
\]
where $C = C(d,p)$ is independent of $u_0$. 
Such continuity in the dual weak sense as in (2) holds for this solution. 
\end{enumerate}

\end{thm}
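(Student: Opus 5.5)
We prove all three parts by the Kato-type fixed point argument in the auxiliary space
\[
X_T:=\Big\{u:\ \|u\|_{X_T}:=\sup_{0<t<T}t^{\frac12(1-\frac dp)}\|u(t)\|_{L^p}<\infty\Big\},
\]
applied to the map
\[
\Psi(u)(t)=e^{-tA}u_0-\int_0^te^{-(t-\tau)A}\mathbb P\nabla\cdot(u\otimes u)\,d\tau .
\]
We take as given the facts announced in the introduction for Section~2: the uniform $L^p$-boundedness of $\phi_j(\sqrt A)$, the $L^p$--$L^q$ estimates of $e^{-tA}$ in $\dot B^s_{p,q}$, and the equivalence of Besov norms of different regularity via the semigroup.

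\textbf{Linear part.} The first step is the caloric characterization of negative-regularity Besov norms. For $s=-1+\frac dp<0$ we have
\[
\sup_{t>0}t^{-s/2}\|e^{-tA}u_0\|_{L^p}\le C\|u_0\|_{\dot B^{s}_{p,\infty}}.
\]
To see this, write $e^{-tA}u_0=\sum_j e^{-tA}\phi_j(\sqrt A)u_0$ and use $\|e^{-tA}\phi_j(\sqrt A)\|_{L^p\to L^p}\le Ce^{-ct2^{2j}}$; the sum in $j$ then converges because $s<0$.

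Next, for the small-data local result we need that the norm $\|e^{-tA}u_0\|_{X_T}$ becomes small as $T\to0$:
\begin{itemize}
\item If $q<\infty$, approximate $u_0$ in $\dot B^s_{p,q}$ by finitely many frequency pieces. Each piece $v$ satisfies $\|e^{-tA}v\|_{L^p}\le C\|v\|_{L^p}$, so $t^{-s/2}\|e^{-tA}v\|_{L^p}\to0$ as $t\to0$.
\item If $q=\infty$, the low frequencies $j\le J$ contribute $o(1)$ in the same way. The frequencies $j>J$ contribute at most $C\limsup_j 2^{sj}\|\phi_j(\sqrt A)u_0\|_{L^p}+\varepsilon\le C\delta_0+\varepsilon$.
\end{itemize}

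\textbf{Bilinear estimate.} We aim to show
\[
\Big\|\int_0^te^{-(t-\tau)A}\mathbb P\nabla\cdot(u\otimes v)\,d\tau\Big\|_{X_T}\le C\|u\|_{X_T}\|v\|_{X_T},
\]
with $C$ independent of $T$. The key operator bound is
\[
\|e^{-tA}\mathbb P\nabla\cdot F\|_{L^p}\le Ct^{-\frac12-\frac d{2p}}\|F\|_{L^{p/2}}.
\]
This should follow by duality from the gradient estimate $\|\nabla e^{-tA}g\|_{L^{(p/2)'}}\le Ct^{-\frac12-\frac d{2p}}\|g\|_{L^{p'}}$. That gradient estimate in turn comes from the Gaussian pointwise bound on the kernel of $e^{-tA}$ and its derivative, which is available because the Neumann Hodge Laplacian commutes with $\mathbb P$. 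With this bound, the time integral becomes a Beta function,
\[
\int_0^t(t-\tau)^{-\frac12-\frac d{2p}}\tau^{-(1-\frac dp)}\,d\tau=Ct^{-\frac12(1-\frac dp)},
\]
which converges because $p>d$ ensures $\frac12+\frac d{2p}<1$ and $1-\frac dp<1$.

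\textbf{Fixed point and regularity.} The fixed point in a small ball of $X_T$ gives existence and uniqueness in $X_T$. For global existence (part (3)) we take $T=\infty$ and use the smallness of the full norm.

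To recover $u\in L^\infty(0,T;\dot B^s_{p,q})$ and continuity, we estimate the Duhamel term in $\dot B^{s}_{p,q}$. Apply $\phi_j(\sqrt A)$ and use the bound
\[
\|\phi_j(\sqrt A)e^{-(t-\tau)A}\mathbb P\nabla\cdot F\|_{L^p}\le C2^{j(1+\frac dp)}e^{-c(t-\tau)2^{2j}}\|F\|_{L^{p/2}}.
\]
Integrating in $\tau$ and splitting at $\tau\sim t/2$ yields $\ell^1$-summable, in particular $\ell^q$-summable, bounds. Strong continuity for $q<\infty$ then follows from density of the frequency-localized data. For $q=\infty$ only the weak-$*$ continuity against $g\in\dot B^{-s}_{p',1}$ is available, obtained by the pairing via $\Phi_j$.

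\textbf{Main obstacle.} The delicate step is the derivative-semigroup estimate $\|e^{-tA}\mathbb P\nabla\cdot F\|_{L^p}$ in the bounded domain, together with its frequency-localized version. The difficulty is controlling $\mathbb P\nabla\cdot$ composed with $e^{-tA}$ uniformly for large $t$, where the Assumption (no harmonic forms) is needed to get exponential decay and avoid kernel contributions. My first attempt would be to derive it from Gaussian bounds for $\nabla_x K(t,x,y)$ obtained via the Hodge Laplacian on 1-forms with the Neumann condition.
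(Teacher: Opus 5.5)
Your proposal is correct. Its linear step and its weighted-$L^p$ bilinear estimate are the same as the paper's: Step~1, and the second half of Step~2 with the same Beta integral. Where you differ is in how the Besov bound for the Duhamel term is obtained.

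The paper carries $\sup_t\|u(t)\|_{\dot B^{-1+\frac dp}_{p,\infty}}$ in the norm $\|\cdot\|_Y$ and estimates it by duality:
\begin{itemize}
\item it pairs the Duhamel term with $g\in\dot B^{1-\frac dp}_{p',1}$ and moves $\nabla e^{-(t-\tau)A}$ onto $g$;
\item it bounds $\|\nabla e^{-(t-\tau)A}g\|_{L^{(p/2)'}}$ by $\|e^{-(t-\tau)A}g\|_{\dot B^1_{(p/2)',1}}$ (Proposition~\ref{prop:0724-3}), and then by $(t-\tau)^{-\frac dp}\|g\|_{\dot B^{1-\frac dp}_{p',1}}$ (Proposition~\ref{prop:0726-3}(3));
\item it concludes with the norm characterization of Proposition~\ref{prop:0724-1}.
\end{itemize}

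You run the contraction in the Kato norm alone. This is legitimate, since the paper's bilinear bounds also use only the weighted $L^p$ part of $\|u\|_Y$ on the right-hand side. You then sum the dyadic bound $\|\phi_j(\sqrt A)e^{-(t-\tau)A}\mathbb P\nabla\cdot F\|_{L^p}\lesssim 2^{j(1+\frac dp)}e^{-c(t-\tau)2^{2j}}\|F\|_{L^{p/2}}$ over $j$, which converges precisely because $0<\frac dp<1$.

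What your route buys: it puts the Duhamel term in $\dot B^{-1+\frac dp}_{p,1}$ uniformly in $t$, hence in every $\dot B^{-1+\frac dp}_{p,q}$ at once. That is what the continuity claims in parts (1) and (3) require, whereas the paper only writes out the $\dot B^{-1+\frac dp}_{p,\infty}$ bound. It also bypasses the duality theory. The cost is an extra frequency-localized estimate, which needs the decay $\|e^{-tA}\phi_j(\sqrt A)\|_{L^p\to L^p}\le Ce^{-ct2^{2j}}$ from the spectral multiplier bound and the Bernstein-type bound of Proposition~\ref{prop:0726-2}. The paper's duality form, for its part, matches the weak-$*$ continuity statement of part (2) directly.

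Two corrections to details.

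First, your ``main obstacle'' is not one, and no Gaussian bound for $\nabla_x$ of the kernel is needed. Write $e^{-tA}=e^{-\frac t2A}e^{-\frac t2A}$ and apply Proposition~\ref{prop:0724-2} twice: $e^{-\frac t2A}\mathbb P\nabla$ on $L^{p/2}$ (allowed since $p/2>1$), followed by the $L^{p/2}\to L^p$ smoothing. This gives $\|e^{-tA}\mathbb P\nabla\cdot F\|_{L^p}\le Ct^{-\frac12-\frac d{2p}}\|F\|_{L^{p/2}}$ for all $t>0$. That rests only on the Riesz-transform bound of Proposition~\ref{prop:0809-1}(2), where invertibility of $A$ (the Assumption) enters, and on the Gaussian bound for the kernel itself.

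Second, strong continuity of the Duhamel term for $q<\infty$ is not a density argument. At $t=0$ it follows from your $\dot B^{-1+\frac dp}_{p,1}$ bound together with $\|u\|_{X_t}\to0$. At $t_0>0$ it follows from the semigroup property and the same dyadic bound applied on short time intervals.
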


\noindent {\bf Remarks. } 
(i) If $d < p$, then we have that 
$L^d \subset L^{d,\infty} \subset \dot B^{-1+\frac{d}{p}}_{p,\infty}$. 
Hence so far as bounded domain is concerned, 
our space $\dot B^{-1+\frac{d}{p}}_{p,\infty}$ with $d < p$ for the initial 
data is larger than any other one given by Fujita-Kato \cite {FuKa-1964}, 
Miyakawa \cite{Miy-1980}  and Giga-Miyakawa \cite{GiMi-1985}.  \par
(ii) In case $X_{\mbox{\tiny har}}(\Omega) \ne \{0\}$, zero is an 
eigenvalue of $A$ so that it is difficult to obtain a global solution 
even for small initial data such as Theorem \ref {thm} (3).  
\par
(iii) It seems an interesting problem whether a similar local and well-posedness 
to our result does hold in the case of the Dirichlet boundary condition 
$u =0$ on $\partial\Omega$.  Since the Neumann boundary condition yields 
commutativity between the Laplace operator  $-\Delta$ and the Helmholtz projection    
$\mathbb P$, we may show the pointwise estimate of the kernel function of the 
semigroup $e^{-tA}$, which makes it possible to develop the $L^p$-theory 
starting from the $L^2$-setting. 
\par  
(iv) Concerning the Euler equations for the ideal incompressible fluid in 
${\mathbb R}^d$,  the local well-posedness has been proved for 
$u_0 \in \dot B^{\frac dp +1}_{p, 1}$, $1 \le p \le \infty$ by 
 Chae \cite{Ch-2002} and Pak-Park \cite{Pa-2004}.   
On the other hand, in bounded domain, Kato-Lai \cite{KaLa-1984} had obtained 
the corresponding result in the Sobolev space $u_0 \in H^{s, 2}$ 
with $s = [d/2] + 2$.  It seems an interesting question whether 
our method does work to show the local well-posedness in a larger space than 
that of \cite{KaLa-1984}. We will discuss it in a forthcoming paper.

\section{Preliminary}  

We introduce several key definitions that are essential to addressing our problem. 
These include the definition of distributions derived from the spectral 
decomposition of the Stokes operator on $L^2$, the projection of these 
distributions onto divergence-free vector fields, and the Besov spaces associated 
with the Stokes semigroup. 
These concepts form the foundation of our argument.

In subsection~\ref{subsec:basic}, we review the Stokes operator on $L^2$, 
the projection $\mathbb P_2$ from $L^2$ to $L^2_\sigma$ (the spaces of 
divergence-free vector fields), 
and related elliptic estimates. 
In subsection~\ref{subsec:StokesOp}, we introduce several spaces 
for our test functions and distributions following the approach in 
\cite{FI-2024,IMT-2019}. 
This allows us to define the Stokes operator in these distribution spaces. 
In subsection~\ref{subsec:projection}, we introduce 
the projection $\mathbb P$ on the distribution spaces. 
In subsection~\ref{proj_nonlinear}, 
we define the projection for the nonlinear term 
${\rm div \, } (u \otimes u) $. 
In subsection~\ref{subsec:Besov}, 
we define Besov spaces associated with the Stokes operator in 
the distribution spaces 
introduced in subsection~\ref{subsec:StokesOp}. 
Finally, in subsection~\ref{subsec:StokesSemi}, 
we present several estimates for the Stokes semigroup,  
together with some inequalities involving the gradient, which are 
analogous to the elliptic estimates. 

\subsection{Basic Facts}\label{subsec:basic}

We recall several facts for divergence-free vector fields, 
and the Stokes operator on $L^p$. 
We introdue the set of smooth functions and the Lebesgue space. 
\[
\begin{split}
& C^\infty_{0,\sigma} := \{ u \in C^\infty (\Omega) \, | \, 
{\rm div \,} u = 0, \,\, { \rm supp \, } u \subset \Omega : \text{compact} \}, 
\\
& L^p_\sigma := \overline{ C_{0,\sigma}^\infty (\Omega) }^{\| \cdot \|_{L^p}}. 
\end{split}
\]
We define the Stokes operator $A_p$ on $L^p_\sigma  (\Omega)$ by 
\[
\begin{cases}
D(A_p) = \{ u \in W^{2,p}(\Omega) \, | \, 
\sigma (\delta ,\nu) u = 0, 
\sigma(\delta , \nu) du= 0 \text{ on } \partial \Omega \}
   \cap L^p_\sigma. 
\\
A_p u = - \mathbb P_p\Delta u = - \Delta u , 
\quad u \in D(A_p). 
\end{cases}
\]

In the $L^p$ setting, the solvability of the following Neumann problem is well-known. 
\[
\begin{cases}
\Delta \pi  = {\rm div \, } u
& \text{ in } \Omega , 
\\
\dfrac{\partial \pi}{\partial \nu} = u \cdot \nu 
& \text{ on } \partial \Omega . 
\end{cases}
\]
In fact, let us consider the weak Neumann problem. It is known that  
for the given $u \in L^p$, 
there exists a unique $\pi \in H^{1}_p$ such that 
\[
\langle \nabla \pi , \nabla \phi \rangle 
= \langle u, \nabla \phi \rangle , \quad 
\text{ for all } \phi \in H^1_{p'}, 
\]
where $\langle \cdot , \cdot \rangle$ denotes the dual coupling between $L^p$ and $L^{p'}$. 
The above property is equivalent to the existence of 
a positive constant $C$ exists such that 
\[
\| \nabla \pi \|_{L^p} 
\leq C \sup_{\phi \in H^1_{p'}} 
  \dfrac{|\langle \nabla \pi , \nabla \phi \rangle|}{\| \nabla \phi \|_{L^{p'}}}. 
\]
We then define the projection $\mathbb P_p : L^p \to L^p_\sigma$ by 
\[
\mathbb P_p u = u - \nabla \pi ,
\]
where $\nabla \pi$ is the solution to the weak Neumann problem described above. 


\begin{prop}\label{prop:0809-1}
\begin{enumerate}
\item 
{\rm(}Elliptic estimates{\rm )} 
There exists a positive constant $C$ such that for every $f \in L^p_\sigma$ 
\[
\| \nabla^2 A_p^{-1} f \|_{L^p} \leq C \| f \|_{L^p}.
\]
\item Let $1 < p < \infty$. 
 There exists a positive constant $C $ such that 
for every $f \in L^p$,  
\[
\| \nabla A_p^{-\frac{1}{2}} f \|_{L^p} \leq C \| f \|_{L^p}.  
\]
\end{enumerate}
\end{prop}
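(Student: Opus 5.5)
We plan to reduce both statements to classical elliptic theory for the Hodge Laplacian on $1$-forms with the absolute (Neumann-type) boundary conditions $\sigma(\delta,\nu)u=0$, $\sigma(\delta,\nu)du=0$. The Assumption $X_{\mbox{\tiny har}}(\Omega)=\{0\}$ removes the harmonic fields, so the kernel of this boundary value problem on divergence-free fields is trivial. For (1), given $f\in L^p_\sigma$ we will set $u=A_p^{-1}f$. Then $u\in D(A_p)\subset W^{2,p}$ solves $-\Delta u=f$ in $\Omega$ with $\sigma(\delta,\nu)u=0$ and $\sigma(\delta,\nu)du=0$ on $\partial\Omega$. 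We will invoke the Agmon--Douglis--Nirenberg type $L^p$ estimate for this elliptic system with complementing boundary conditions (as in Schwarz's Hodge decomposition, or Kozono--Yanagisawa \cite{KoYa-2009}):
\[
\|u\|_{W^{2,p}}\le C\big(\|\Delta u\|_{L^p}+\|u\|_{L^p}\big).
\]
The lower order term $\|u\|_{L^p}$ will be removed by a standard compactness/contradiction argument. Suppose $\|f_n\|_{L^p}\to0$ while $\|u_n\|_{W^{2,p}}=1$. By Rellich, a subsequence of $u_n$ converges in $W^{1,p}$, and the a priori estimate upgrades this to convergence in $W^{2,p}$. The limit is then a smooth (by elliptic regularity) field with $\Delta u=0$, $\operatorname{div}u=0$ and the boundary conditions. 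Integrating by parts, $\|du\|_{L^2}^2+\|\delta u\|_{L^2}^2=0$, so the limit lies in $X_{\mbox{\tiny har}}(\Omega)=\{0\}$, which contradicts the normalization. This yields $\|\nabla^2A_p^{-1}f\|_{L^p}\le C\|f\|_{L^p}$. Along the way we also need that $A_p$ is invertible for each $1<p<\infty$. This follows from the $L^2$ case (positivity of $A_2$ under the Assumption) together with the same Fredholm argument.

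For (2), the plan is to reduce to (1) by duality and interpolation. The operator $\nabla A_p^{-1/2}\mathbb P_p$ should be bounded on $L^p$, and the estimate is read for $f\in L^p$ with $\mathbb P_p$ implicitly applied. First we treat $p=2$: for $f\in D(A_2^{1/2})$, integration by parts under the boundary conditions gives
\[
\|\nabla u\|_{L^2}^2\le C\big(\|du\|_{L^2}^2+\|\delta u\|_{L^2}^2+\|u\|_{L^2}^2\big)=C\big(\|A_2^{1/2}u\|_{L^2}^2+\|u\|_{L^2}^2\big).
\]
This is Gaffney's inequality, and the $\|u\|_{L^2}$ term is controlled because $0$ is not in the spectrum. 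For general $p$, we combine (1), i.e.\ $\|\nabla^2u\|_{L^p}\le C\|A_pu\|_{L^p}$, with the Gagliardo--Nirenberg type inequality $\|\nabla u\|_{L^p}\le C\|u\|_{W^{2,p}}^{1/2}\|u\|_{L^p}^{1/2}$. This alone only gives a moment inequality, not boundedness of $\nabla A_p^{-1/2}$. The route we will actually use is complex interpolation. Since $A_p$ has bounded imaginary powers on $L^p_\sigma$, which follows from the Gaussian kernel bounds for $e^{-tA}$ mentioned in the introduction (a Riesz-transform argument), we get $D(A_p^{1/2})=[L^p_\sigma,D(A_p)]_{1/2}\subset[L^p,W^{2,p}]_{1/2}=W^{1,p}$ with equivalent norms. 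Hence $\|\nabla A_p^{-1/2}f\|_{L^p}\le C\|f\|_{L^p}$.

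The main obstacle is the identification $D(A_p^{1/2})\hookrightarrow W^{1,p}$, that is, the boundedness of the imaginary powers (or the $H^\infty$ calculus) of $A_p$. If that is unavailable, we would instead prove the Riesz transform bound directly from $\nabla A^{-1/2}=\pi^{-1/2}\int_0^\infty t^{-1/2}\nabla e^{-tA}\,dt$. The gradient Gaussian bounds $|\nabla_x K_t(x,y)|\le Ct^{-(d+1)/2}e^{-c|x-y|^2/t}$ for small $t$, together with exponential decay for large $t$ (which holds since $A$ has a spectral gap under the Assumption), would give a Calder\'on--Zygmund kernel. Its $L^2$ bound from the $p=2$ step then extends to $1<p<\infty$ by Calder\'on--Zygmund theory on the space of homogeneous type $\Omega$.
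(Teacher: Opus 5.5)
Your proposal is correct and follows the same route as the paper. Part (1) is the Agmon--Douglis--Nirenberg estimate for the Hodge Laplacian with absolute boundary conditions, with the lower-order term removed using $X_{\mbox{\tiny har}}(\Omega)=\{0\}$; this is what the cited Kozono--Yanagisawa lemma supplies. Part (2) is the complex interpolation $D(A_p^{1/2})=[L^p_\sigma,D(A_p)]_{1/2}\hookrightarrow[L^p,W^{2,p}]_{1/2}=W^{1,p}$, as in Miyakawa's Lemma~4.2.

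The one ingredient you justify yourself is the boundedness of the imaginary powers of $A_p$, which the paper simply imports from Miyakawa. Deriving it from the Gaussian kernel bound works, but through Duong--Robinson type $H^\infty$-calculus results rather than a Riesz transform argument.
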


\noindent 
{\bf Remark. } We refer to the introduction in the paper~\cite{IMT-2021}, 
which mentions existing results about derivative estimates on domains. 

\vskip3mm 

\begin{pf}
(1) 
We refer to the papers by Agmon, Diuglis, and Nirenberg~\cite{ADN-1964} and Kozono and Yanagisawa~\cite{KoYa-2009} (see~Lemma~4.4 (2)) for the proofs. 

\noindent 
(2) This is proved using complex interpolation, as in the argument of the proof of Lemma~4.2 in \cite{Miy-1980}.
\end{pf}

\vskip3mm 

The operator $A_2$ is a self-adjoint operator on $L^2_\sigma$. 
By the spectral theorem, a resolution $\{ E(\lambda) \}_{\lambda \in \mathbb R}$ 
of the identity exists such that 
\[
f = \int_{-\infty}^\infty dE(\lambda ) f \quad \text{ in } L^2_\sigma , 
\quad f \in L^2_\sigma, 
\]
and 
$\varphi (A_2)$ is well-defined as an operator on $L^2_\sigma$ by 
\[
\varphi (A_2) f = \int_{-\infty}^\infty \varphi(\lambda) dE(\lambda) f , 
\]
where $\varphi : \mathbb R \to \mathbb C$ is an arbitrary measurable function.

\subsection{Distribution Spaces and the Stokes Operator}\label{subsec:StokesOp}
We extend $\varphi (A_2)$ on $L^2$ to operators on distribution spaces. 
To this end, we need the Gaussian upper bounds for the semigroup 
(see the appendix in~\cite{Miy-1980} and the paper \cite{RaSi-1971}). 

\begin{lem}\label{lem:0726-1}
Let $u_0 \in L^2_\sigma$ and $u(t) = e^{-t A_2} u_0 $. Then 
the kernel $e^{-tA_2}(x,y)$ of $ e^{-t A_2}$ satisfies 
that there exists a positive constant $C$ such that 
\[
|e^{-tA_2}(x,y)| \leq \dfrac{C}{t^{\frac{d}{2}}}e^{-\frac{|x-y|^2}{Ct}}, 
\]
for all $t >0$ and almost every $x,y \in \Omega$. 
\end{lem}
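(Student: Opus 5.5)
The plan is to reduce the Gaussian bound for $e^{-tA_2}$ to the Gaussian bound for the heat semigroup of the Hodge Laplacian on $1$-forms with absolute (Neumann) boundary conditions. Let $\Delta_N$ denote $-\Delta = d\delta+\delta d$ acting on $1$-forms in $L^2(\Omega)$, with domain given by $\sigma(\delta,\nu)u=0$ and $\sigma(\delta,\nu)du=0$ on $\partial\Omega$. This is a nonnegative self-adjoint operator in $L^2(\Omega)$ associated with the closed form $\|du\|^2+\|\delta u\|^2$. The key structural fact is that $\Delta_N$ commutes with the Helmholtz--Weyl projection $\mathbb P_2$. Indeed, if $u\in D(\Delta_N)$ and $u=\mathbb P_2 u+\nabla\pi$, then $\nabla\pi$ again satisfies the boundary conditions: $d\nabla\pi=0$, and $\partial_\nu\pi=u\cdot\nu=0$. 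Moreover, $\Delta\nabla\pi=\nabla\Delta\pi=\nabla\,{\rm div}\,u$ is a gradient. Hence $\Delta_N$ maps $L^2_\sigma\cap D(\Delta_N)$ into $L^2_\sigma$, and gradients into gradients. Since the Assumption $X_{\rm har}(\Omega)=\{0\}$ removes harmonic fields, the Hodge decomposition $L^2=L^2_\sigma\oplus\nabla H^1$ is an orthogonal decomposition reducing $\Delta_N$. Consequently $A_2=\Delta_N|_{L^2_\sigma}$ and
\[
e^{-tA_2}\mathbb P_2 = e^{-t\Delta_N}\mathbb P_2 = \mathbb P_2 e^{-t\Delta_N}.
\]

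Given this, I would obtain the Gaussian upper bound for the kernel of $e^{-t\Delta_N}$ on $1$-forms. First, for $0<t\le 1$, I would use the parametrix/local construction of the heat kernel for an elliptic system with Lopatinskii--Shapiro (absolute) boundary conditions on a compact smooth manifold with boundary. This is the appendix of \cite{Miy-1980} and the Ray--Singer analysis \cite{RaSi-1971}, and it gives $|e^{-t\Delta_N}(x,y)|\le Ct^{-d/2}e^{-|x-y|^2/(Ct)}$. An alternative route is domination: a Bochner--Weitzenb\"ock identity $\Delta_N=\nabla^*\nabla+\mathcal R$ with bounded curvature terms (including the boundary second fundamental form), combined with Kato's inequality, gives $|e^{-t\Delta_N}u|\le e^{ct}e^{t\Delta_{\rm Neu}}|u|$ pointwise. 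The Neumann scalar heat kernel has Gaussian bounds on smooth bounded domains. Second, for $t\ge1$ I would use the spectral gap. On $L^2_\sigma$, the Assumption gives $\ker A_2=\{0\}$, and since $A_2$ has compact resolvent, $A_2\ge\lambda_1>0$. Writing $e^{-tA_2}=e^{-A_2/2}e^{-(t-1)A_2}e^{-A_2/2}$ and using $L^1\to L^2$ and $L^2\to L^\infty$ bounds of $e^{-A_2/2}$ (from the short-time estimate), I obtain $|e^{-tA_2}(x,y)|\le Ce^{-\lambda_1 t}$. Since $|x-y|\le{\rm diam}\,\Omega$ and $t\ge1$, this is dominated by $Ct^{-d/2}e^{-|x-y|^2/(Ct)}$ after enlarging $C$. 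Here the restriction to $L^2_\sigma$ matters: on gradients the Neumann scalar Laplacian also has a gap, but only after removing constants, which is automatic for $\nabla\pi$.

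Finally I have to pass from the kernel of $e^{-t\Delta_N}$ to the kernel of $e^{-tA_2}$, understood as the operator $e^{-tA_2}\mathbb P_2$ on $L^2$. Since $\mathbb P_2$ is a nonlocal operator with a singular kernel, I would not compose kernels directly. Instead, I would write
\[
e^{-tA_2}\mathbb P_2=e^{-t\Delta_N}-e^{-t\Delta_N}(I-\mathbb P_2),
\]
where the second term is the heat semigroup on gradient fields: $e^{-t\Delta_N}\nabla\pi=\nabla e^{-t\Delta_{\rm Neu}}\pi$. Its kernel is $\nabla_x\nabla_y$ applied to the scalar Neumann heat kernel, modulo constants. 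For this term I would use the Gaussian bounds for $\nabla_x\nabla_y$ of the Neumann heat kernel, which carry an extra factor $t^{-1}$. The remaining factor is then compensated by an integration in time, $\int_t^\infty$, using the identity $\nabla_x\nabla_y p_t=-\int_t^\infty\partial_s\nabla_x\nabla_y p_s\,ds$. I expect this step to be the main obstacle: the commutation alone gives only the projected kernel $\mathbb P_2 k_t$, and a pointwise Gaussian bound requires genuine kernel estimates rather than mere $L^p$ boundedness of $\mathbb P_2$. If it proves delicate, I would fall back on the direct approach of \cite{Miy-1980}, constructing the kernel of $e^{-tA_2}$ on divergence-free fields from the parametrix for the boundary value problem \eqref{NS}, linearized without the pressure. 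This is exactly where the Neumann condition, unlike the Dirichlet one, makes the pressure vanish.
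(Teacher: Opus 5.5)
Your first three steps follow the route the paper relies on. The paper gives no proof; it cites the appendix of \cite{Miy-1980} and \cite{RaSi-1971}. These supply the parametrix Gaussian bound for the heat kernel $K_t(x,y)$ of the Hodge Laplacian $\Delta_N$ on $1$-forms with the Neumann (absolute) boundary conditions. Combined with the commutation of $\Delta_N$ with $\mathbb P_2$, this gives $e^{-tA_2}=e^{-t\Delta_N}|_{L^2_\sigma}$. Your spectral-gap argument for $t\ge1$ fills in the large-time range the paper leaves implicit; apply it to $K_t$ itself, using also the gap on gradients that you mention. That already proves the lemma in the sense in which it is stated and used. $K_t$ is a kernel for $e^{-tA_2}$, since $u(t,x)=\int_\Omega K_t(x,y)u_0(y)\,\mathrm{d}y$ for $u_0\in L^2_\sigma$. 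It is also what the later $L^p$ theory needs, because $\phi_j(\sqrt{\Delta_N})$ commutes with $\mathbb P_2$ and restricts to $\phi_j(\sqrt{A_2})$.

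The genuine problem is your last step. Reading ``the kernel of $e^{-tA_2}$'' as the kernel of $e^{-tA_2}\mathbb P_2$ on all of $L^2$ turns the lemma into a false statement, so no argument can close that step.

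\begin{itemize}
\item Write $e^{-tA_2}\mathbb P_2=e^{-t\Delta_N}-e^{-t\Delta_N}(I-\mathbb P_2)$. For fixed $x\neq y$, its kernel tends as $t\to0$ to the off-diagonal kernel of $\mathbb P_2$.
\item That off-diagonal kernel is $-\nabla_x\nabla_yN(x,y)$, where $N$ is the Neumann function. In the interior it equals $(\nabla^2\Gamma)(x-y)$ plus a smooth term, with $\Gamma$ the fundamental solution of $-\Delta$. So it has exact size $|x-y|^{-d}$.
\item By contrast, $Ct^{-d/2}e^{-|x-y|^2/(Ct)}\to0$ for fixed $x\neq y$.
\end{itemize}

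Your own mechanism shows this quantitatively. The kernel of $e^{-t\Delta_N}(I-\mathbb P_2)$ is $\nabla_x\nabla_y\int_t^\infty(p_s(x,y)-|\Omega|^{-1})\,\mathrm{d}s$, not $\nabla_x\nabla_y p_t$. (For the latter, the identity $\nabla_x\nabla_y p_t=-\int_t^\infty\partial_s\nabla_x\nabla_y p_s\,\mathrm{d}s$ would not remove the extra $t^{-1}$ anyway.) Integrating $s^{-d/2-1}e^{-|x-y|^2/(Cs)}$ over $s>t$ gives $\simeq|x-y|^{-d}$ whenever $|x-y|^2\gtrsim t$. So you get only the Oseen-type bound $C(\sqrt t+|x-y|)^{-d}$, exactly as for $e^{t\Delta}\mathbb P$ on $\mathbb R^d$.

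The step should simply be dropped. Your ``fallback'' (the parametrix for the linear problem without pressure) is not an alternative: it produces the same kernel $K_t$ as your second step.

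A minor point on your domination alternative: unless $\Omega$ is convex, the boundary term $\int_{\partial\Omega}\mathrm{II}(u,u)$ in the Weitzenb\"ock identity has no sign. Kato's inequality then gives domination by a Robin-type semigroup, not by $e^{ct}e^{t\Delta_{\rm Neu}}$. That semigroup still has short-time Gaussian bounds, so the route survives with this correction.
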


The lemma above and the ellipitic estimate on $L^2$ in 
Proposition~\ref{prop:0809-1} lead us to 
the extension of the 
spectral restriction operator $\phi_j(\sqrt{A_2})$ 
to a bounded operator on $L^p$ for all $1 \leq p \leq \infty$ 
together with the uniformity with respect to $j \in \mathbb Z$, 
 using the argument of \cite{IMT-2018,JeNa-1995} 
(see also \cite{Ou_2005,ThOuSi-2002}). 

We briefly review the argument presented in \cite{IMT-2018, JeNa-1995}. 
The $L^1$ estimate plays a crucial role, as duality provides 
the $L^\infty$ estimate, and complex interpolation extends the result 
to all $L^p$ spaces for $1 \leq p \leq \infty$. To establish the $L^1$ estimate, 
we decompose the domain $\Omega$ into cubes with side length $t^{-\frac{1}{2}}$ 
and apply the Cauchy-Schwarz inequality on each cube. This approach leads 
to the challenge of managing the localization via the $L^2$ norm and 
the non-local operator $\phi_j(\sqrt{A_2})$. The Gaussian upper bound 
in Lemma~\ref{lem:0726-1} ensures integrability 
(specifically, the convergence of the sum over all cubes), 
while expressing 
$\phi_j(\sqrt{A_2})$ using a resolvent function and partial derivatives 
requires the use of elliptic estimates. Ultimately, we obtain 
the boundedness of the operator norm $\|\phi_j(\sqrt{A_2})\|_{L^1 \to L^1}$ 
in weighted Sobolev spaces (see also Section 8 of \cite{IMT-2018}).
We here notice that the boundedness holds, including the end-point cases 
$p = 1$ and $p =\infty$. 

\vskip3mm 

The following proposition serves as the foundation for extending
$L^2$ theory to $L^p$ spaces and Besov spaces. 

\begin{prop}\label{prop:0726-2} {\rm (}\cite{IMT-2018}{\rm )}
Let $1 \leq p \leq \infty$, $\phi \in C_0^\infty (\mathbb R)$, 
and $\phi_j(\lambda ) = \phi (2^{-j}\lambda) $ for $j \in \mathbb Z $ 
 and $\lambda \in \mathbb R$. 
Then we have 
\[
\displaystyle 
\sup _{j \in \mathbb Z} \| \phi_j(\sqrt{A_2}) \|_{L^p \to L^p} < \infty. 
\]
Furthermore, if $1 \leq r \leq p \leq \infty$, then 
\[
\displaystyle 
\sup _{j \in \mathbb Z} 2^{-d(\frac{1}{r}-\frac{1}{p})j}\| \phi_j(\sqrt{A_2}) \|_{L^r \to L^p} < \infty . 
\]
\end{prop}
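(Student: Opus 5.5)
The plan follows Jensen--Nakamura and Iwabuchi--Matsuyama--Taniguchi, with the Gaussian bound of Lemma~\ref{lem:0726-1} and the elliptic estimates of Proposition~\ref{prop:0809-1} as the only domain-specific inputs. First I will reduce to $j=0$ by scaling. If $\Omega$ is dilated by $2^{j}$, the Stokes operator with Neumann condition on the dilated domain $\Omega_j:=2^{j}\Omega$ satisfies $A_{\Omega_j}=2^{-2j}$ times the conjugate of $A_2$. Moreover, the Gaussian bound of Lemma~\ref{lem:0726-1} is scale invariant, so it holds on $\Omega_j$ with the same constant. Hence it suffices to prove $\|\phi(\sqrt{A})\|_{L^1\to L^1}\le C$ with $C$ uniform over the family $\{\Omega_j\}$, using only the Gaussian bound and the $L^2$ theory. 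The $L^\infty$ bound then follows by duality, since $\phi(\sqrt{A_2})$ is self-adjoint when $\phi$ is real; the general case splits into real and imaginary parts. Riesz--Thorin interpolation gives all $1\le p\le\infty$.

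For the $L^1$ estimate, I will write $\phi(\lambda)=\psi(\lambda^2)$ and set $\tilde\psi(\mu):=\psi(\mu)e^{\mu}$, so that $\phi(\sqrt{A})=\tilde\psi(A)e^{-A}$. I decompose $\mathbb R^d$ into unit cubes $Q_k$, $k\in\mathbb Z^d$, and take $f\in L^1$ supported in one cube $Q_0$ after translation. For a weight $\langle x\rangle^{\beta}$ with $\beta>d/2$, Cauchy--Schwarz on each cube gives
\[
\|\phi(\sqrt A)f\|_{L^1}\le C\|\langle x\rangle^{\beta}\phi(\sqrt A)f\|_{L^2}.
\]
It therefore suffices to bound the weighted $L^2$ norm by $\|e^{-A/2}f\|$ in a weighted $L^2$ space. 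The Gaussian bound gives
\[
\|\langle x\rangle^{\beta}e^{-A/2}f\|_{L^2}\le C\|f\|_{L^1}
\]
for $f$ supported near the origin, uniformly in the domain.

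The remaining step, and the main obstacle, is to show that $\tilde\psi(A)$ is bounded on the weighted space $L^2(\langle x\rangle^{2\beta}dx)$. For this I will use the commutator method. Write $\tilde\psi(A)$ through the Helffer--Sjöstrand formula with an almost analytic extension, as an integral of resolvents $(z-A)^{-1}$. Then commute the polynomial weight $x^\alpha$, $|\alpha|\le\beta$ with $\beta$ an even integer, through the resolvent. Each commutator $[x_k,(z-A)^{-1}]$ produces $(z-A)^{-1}\nabla(z-A)^{-1}$-type terms, because $[x_k,\Delta]=-2\partial_k$; the projection $\mathbb P$ does not enter here since the Neumann Laplacian commutes with it. These terms are controlled on $L^2$ by $|\mathrm{Im}\,z|^{-2}$ times powers of $\langle z\rangle$, using $\|\nabla A^{-1/2}\|_{L^2\to L^2}\le C$ from Proposition~\ref{prop:0809-1}. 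That bound is an energy identity with constant $1$, which is crucial for uniformity under dilation. The almost analytic extension absorbs the powers of $|\mathrm{Im}\,z|^{-1}$. One must also check that multiplication by $x_k$ preserves the domain, that is, the boundary conditions $\sigma(\delta,\nu)u=0$ and $\sigma(\delta,\nu)du=0$; I expect this to hold because multiplication by $x_k$ perturbs $du$ only by a zeroth-order term, which would be handled in the weak (form) sense. Together these give $\sup_j\|\phi_j(\sqrt{A_2})\|_{L^1\to L^1}<\infty$.

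For the $L^r\to L^p$ bound, I choose $\tilde\phi\in C_0^\infty$ with $\tilde\phi\phi=\phi$ and write $\phi_j(\sqrt{A_2})=\phi_j(\sqrt{A_2})\,e^{2^{2j}A_2}\,e^{-2^{-2j}A_2}$. Here $\phi_j(\lambda)e^{2^{-2j}\lambda^2}$ is again a dilate of a fixed $C_0^\infty$ function, so the first part of the proposition bounds it on $L^p$ uniformly in $j$. The Gaussian bound gives $\|e^{-tA_2}\|_{L^r\to L^p}\le Ct^{-\frac d2(\frac1r-\frac1p)}$ by Young's inequality. Taking $t=2^{-2j}$ produces the factor $2^{d(\frac1r-\frac1p)j}$, which proves the second estimate.
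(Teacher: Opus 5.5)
Your route is the paper's own. The paper defers to Iwabuchi--Matsuyama--Taniguchi and Jensen--Nakamura and sketches exactly this chain: an $L^1$ bound via cubes at scale $2^{-j}$ and Cauchy--Schwarz against a polynomial weight, the Gaussian bound for summability, and a resolvent/commutator argument with gradient estimates for the weighted $L^2$ bound, followed by duality and interpolation. Factoring through $e^{-2^{-2j}A_2}$ for $L^r\to L^p$ is the standard complement.

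However, the check you postpone in the commutator step fails: multiplication by $x_k$ does \emph{not} preserve $D(A)$. For $d=3$, if $u\cdot\nu=0$ and $\mathrm{rot}\,u\times\nu=0$ on $\partial\Omega$, then
\[
\mathrm{rot}(x_ku)\times\nu=(e_k\times u)\times\nu+x_k\,\mathrm{rot}\,u\times\nu=\nu_k\,u \quad \text{on } \partial\Omega ,
\]
which is the tangential trace of $u$ and is nonzero in general. In any dimension, $\sigma(\delta,\nu)d(x_ku)$ is a nonzero multiple of $\nu_k u$. Also $\mathrm{div}(x_ku)=u_k$, so $x_ku$ leaves $L^2_\sigma$, as does $f\chi_{Q_k}$ in your localization.

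So, as your remark about $\mathbb P$ suggests, the argument must be run from the outset for the Neumann Hodge Laplacian on all of $L^2$, which commutes with $\mathbb P$ and restricts to $A_2$ on $L^2_\sigma$. The commutators must be taken in the form sense; your fallback has to be the actual mechanism, not a patch. The form domain $\{u\in H^1:\ u\cdot\nu=0\}$ of $Q(u,v)=(du,dv)+(\delta u,\delta v)$ is preserved by $x_k$. With $u=(z-A)^{-1}f$ and $w=(\bar z-A)^{-1}g$, one has $\langle[x_k,(z-A)^{-1}]f,g\rangle=-\big(Q(x_ku,w)-Q(u,x_kw)\big)$. The $x_k$-terms cancel, leaving a first-order form bounded by $C(\|u\|_{L^2}\|\nabla w\|_{L^2}+\|\nabla u\|_{L^2}\|w\|_{L^2})$. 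The gradient bound this needs is $\|\nabla u\|_{L^2}\le C\,Q(u)^{1/2}$ for all (not only solenoidal) $u$ in the form domain. That follows from Gaffney's inequality plus the spectral gap: the Assumption rules out harmonic fields, and on gradients one has the scalar Neumann gap.

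Smaller points follow.
\begin{enumerate}
\item That gradient bound is not an ``energy identity with constant $1$'', since Gaffney's formula carries a boundary term with the second fundamental form of $\partial\Omega$. No special constant is needed, though: $\|\nabla A^{-1/2}\|_{L^2\to L^2}$ is exactly invariant under $x\mapsto 2^jx$, so uniformity in $j$ is automatic.
\item Writing $\phi(\lambda)=\psi(\lambda^2)$ with $\psi$ smooth requires $\phi$ to be even or to vanish near $0$. The bottom of the spectrum of the rescaled operator tends to $0$, so for a general $\phi\in C_0^\infty(\mathbb R)$ the odd part $\lambda\,\omega(\lambda^2)$ near $0$ must be treated separately. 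One way is dyadically, as $\sum_{k\le k_0}2^k g_k(2^{-k}\lambda)$ with $g_k$ supported in $[1/2,2]$ and bounded in $C^N$ uniformly in $k$. This is irrelevant for the $\phi$'s the paper actually uses.
\item $e^{2^{2j}A_2}$ should read $e^{2^{-2j}A_2}$.
\item $e^{-A/2}$ should be $e^{-A}$ to match $\tilde\psi(\mu)=\psi(\mu)e^{\mu}$.
\end{enumerate}
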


\begin{pf}
The proof is analogous to the proofs in \cite{IMT-2018,JeNa-1995}. 
\end{pf}

\vskip3mm 

Following the procedure in the paper~\cite{IMT-2019} (see also \cite{FI-2024}), 
we introduce test function spaces $\mathcal X_\sigma$ and $\mathcal Z_\sigma$, 
which are Fr\'echet spaces,  
and spaces of distributions as the topological duals of these. 
In the Euclidean space, we can regard that 
$\mathcal X_\sigma$ corresponds to the Schwartz class $\mathcal S(\mathbb R^d)$, 
defined by 
\[
\mathcal S(\mathbb R^d) 
= \{ f \in C^\infty (\mathbb R^d) \, | \, 
  \sup_{|\alpha|, |\beta | \leq N} | x^\alpha \partial_x^\beta f(x) | < \infty 
    \text{ for all } N \in \mathbb N\} ,
\]
and $\mathcal Z_\sigma$ corresponds to the following space,  
\[
\Big\{ f \in \mathcal S(\mathbb R^d) \, \Big| \, 
  \int_{\mathbb R^d} x^\alpha f(x) ~\mathrm{d}x = 0  
  \text{ for all multi-index } \alpha  
\Big\}. 
\]

\noindent 
{\bf Definition. } (Test function spaces and distributions)
\begin{enumerate}
\item[(i)] (Non-homogeneous type) 
We define a test function space $\mathcal X_\sigma$ by 
\[
\mathcal X_\sigma 
:= \{ u \in L^1 \cap L^2_\sigma \, | \, p_M(u) < \infty \text{ for all } M \in \mathbb N\}, 
\]
where 
\[
p_M(u) := \| u \|_{L^1 }+ \sup_{j \in \mathbb N} 2^{Mj} \| \phi_j(\sqrt{A_2}) u \|_{L^1} .
\]

\item[(ii)] (Homogeneous type) 
We define a test function space $\mathcal Z_\sigma$ by 
\[
\mathcal Z_\sigma 
:= \{ u \in \mathcal X_\sigma  \, | \, q_M(u) < \infty 
 \text{ for all } M \in \mathbb N\}, 
\]
where 
\[
q_M(u) := p_M(u) + \sup_{j \leq 0} 2^{M|j|} \| \phi_j(\sqrt{A_2}) u \|_{L^1} . 
\]

\item[(iii)] 
$\mathcal X_\sigma', \mathcal Z'_\sigma$ are the topological duals of 
$\mathcal X_\sigma, \mathcal Z_\sigma$, respectively. 

\end{enumerate}

\vskip3mm

The non-homogeneous and homogeneous spaces will be defined as subspaces of 
$\mathcal X_\sigma ' , \mathcal Z_\sigma '$. 
We give the following definition for 
functions in the Lebesgue spaces belonging 
to $\mathcal X_\sigma ', \mathcal Z_\sigma '$. 

\vskip3mm 

\noindent 
{\bf Definition. } ($L^p$ as subspaces of $\mathcal X'_\sigma, \mathcal Z'_\sigma$)
\quad 
Let $1 \leq p \leq \infty$. 

\begin{enumerate}
\item 
For $f \in L^p$, we define $f$ as an element of $\mathcal X'_\sigma$ by 
\[
{}_{\mathcal X'_\sigma}\langle f, g \rangle _{\mathcal X_\sigma} 
= \int_{\Omega} f(x) \overline{g(x)}~~\mathrm{d}x , 
\quad g \in \mathcal X_\sigma . 
\]

\item 
For $f \in L^p$, we define $f$ as an element of $\mathcal Z'_\sigma$ by 
\[
{}_{\mathcal Z'_\sigma}\langle f, g \rangle _{\mathcal Z_\sigma} 
= \int_{\Omega} f(x) \overline{g(x)}~\mathrm{d}x , 
\quad g \in \mathcal Z_\sigma . 
\]
\end{enumerate}

Analogously to the Dirichlet Laplacian (see~\cite{IMT-2019}), 
we have the following continuous embedding. 
\[
\mathcal Z_\sigma  
\hookrightarrow \mathcal X_\sigma 
\hookrightarrow L^p 
\hookrightarrow \mathcal X_\sigma ' 
\hookrightarrow \mathcal Z_\sigma ' . 
\]

The operator $A_2$ with domain restricted to 
$\mathcal X_\sigma $ or $ \mathcal Z_\sigma$,  defines an operator from 
$\mathcal X_\sigma $ or $ \mathcal Z_\sigma$ to itself.  
We extend $A_2$ to operators on $\mathcal X'_\sigma $ and $ \mathcal Z'_\sigma$, 
which we denote by $A$ in what follows. 

\vskip3mm

\noindent 
{\bf Definition. } (The Stokes operator $A$ on distribution spaces) 
\begin{enumerate}
\item For  $f \in \mathcal X'_\sigma$, we define 
$Af \in \mathcal X'_\sigma$ by 
\[
{}_{\mathcal X'_\sigma}\langle Af, g \rangle _{\mathcal X_\sigma} 
:= {}_{\mathcal X'_\sigma}\langle f, A_2g \rangle _{\mathcal X_\sigma} , 
\quad g \in \mathcal X_\sigma. 
\]

\item For $f \in \mathcal Z'_\sigma$, we define 
$Af \in \mathcal Z'_\sigma$ by 
\[
{}_{\mathcal Z'_\sigma}\langle Af, g \rangle _{\mathcal Z_\sigma} 
:= {}_{\mathcal Z'_\sigma}\langle f, A_2g \rangle _{\mathcal Z_\sigma} , 
\quad g \in \mathcal Z_\sigma. 
\]

\item 
We abbreviate $A_2$ as $A$. 
\end{enumerate}

\vskip3mm 

We also define $\phi(A)$ on $\mathcal X_\sigma '$ and $\mathcal Z_\sigma '$. 

\vskip3mm 

\noindent 
{\bf Definition. } Let $\phi \in C_0^\infty ((0,\infty))$. 
\begin{enumerate}
\item 
For $f \in \mathcal X'_\sigma$, we define 
$\phi (A) f \in \mathcal X'_\sigma$ by 
\[
{}_{\mathcal X'_\sigma}\langle \phi (A) f, g \rangle _{\mathcal X_\sigma} 
:= {}_{\mathcal X'_\sigma}\langle f, \phi (A) g \rangle _{\mathcal X_\sigma} , 
\quad g \in \mathcal X_\sigma . 
\]

\item 
For $f \in \mathcal Z'_\sigma$, we define 
$\phi (A) f \in \mathcal Z'_\sigma$ by 
\[
{}_{\mathcal Z'_\sigma}\langle \phi (A) f, g \rangle _{\mathcal Z_\sigma} 
:= {}_{\mathcal Z'_\sigma}\langle f, \phi (A) g \rangle _{\mathcal Z_\sigma} , 
\quad g \in \mathcal Z_\sigma . 
\]

\end{enumerate}

\subsection{The Projection on Distribution Spaces} \label{subsec:projection}

We define the projection to divergence-free verctor fields 
for functions belonging to $\mathcal X'_\sigma $ 
based on the projection $\mathbb P_2$ on $L^2$.  

\vskip3mm 

\noindent {\bf Definition. } 
\begin{enumerate}
\item 
For $f \in \mathcal X'_\sigma$, we define $\mathbb P f$ 
as an element of $\mathcal X'_\sigma$ by 
\[
{}_{\mathcal X'_\sigma} \langle \mathbb P f , g \rangle _{\mathcal X_\sigma} 
:= 
{}_{\mathcal X'_\sigma} \langle f , \mathbb P_2 g \rangle _{\mathcal X_\sigma} ,
\quad g \in \mathcal X_\sigma. 
\]

\item 
For $f \in \mathcal Z'_\sigma$, we define $\mathbb P f$ 
as an element of $\mathcal Z'_\sigma$ by 
\[
{}_{\mathcal Z'_\sigma} \langle \mathbb P f , g \rangle _{\mathcal Z_\sigma} 
:= 
{}_{\mathcal Z'_\sigma} \langle f , \mathbb P_2 g \rangle _{\mathcal Z_\sigma} ,
\quad g \in \mathcal Z_\sigma.  
\]

\item We abbreviate $\mathbb P_2$ as $\mathbb P$. 
\end{enumerate}

\vskip3mm 

We notice from the definition of $L^1$ as subspaces of 
$\mathcal X'_\sigma, \mathcal Z'_\sigma$ that for $f \in L^1$ 
\[
{}_{\mathcal X'_\sigma} \langle \mathbb P f , g \rangle _{\mathcal X_\sigma} 
:= \int_{\Omega} f(x) \overline{\mathbb P_2 g (x)}~\mathrm{d}x 
= \int_{\Omega} f(x) \overline{ g (x)}~\mathrm{d}x, 
\quad g \in \mathcal X_\sigma. 
\]
\[
{}_{\mathcal Z'_\sigma} \langle \mathbb P f , g \rangle _{\mathcal Z_\sigma} 
:= \int_{\Omega} f(x) \overline{\mathbb P_2 g (x)}\mathrm{d}x
= \int_{\Omega} f(x) \overline{ g (x)}~\mathrm{d}x, 
\quad g \in \mathcal Z_\sigma.  
\]

\subsection{Projection Acting on the Nonlinear Term} \label{proj_nonlinear} 

For the convection term $(u \cdot \nabla ) u$, 
we give the following definition of the nonlinear part as 
an element of $\mathcal X'_\sigma , \mathcal Z'_\sigma$. 

\vskip3mm 

\noindent {\bf Definition. } 
\begin{enumerate}
\item 
For every measurable functions $u, v$ such that $u \otimes v \in L^1$, 
we define $\mathbb P {\rm div \, } (u \otimes v)$ as an element of 
$\mathcal X'_\sigma$ by 
\[
{}_{\mathcal X'_\sigma}\langle 
   \mathbb P {\rm div \, } (u \otimes v), g \rangle _{\mathcal X_\sigma}
:= - \int _{\Omega} (u \otimes v) \cdot \overline{\nabla g }~\mathrm{d}x, 
\quad g \in \mathcal X_\sigma . 
\]

\item
For every measurable functions $u, v$ such that $u \otimes v \in L^1$, 
we define $\mathbb P {\rm div \, } (u \otimes v)$ as an element of 
$\mathcal Z'_\sigma$ by 
\[
{}_{\mathcal Z'_\sigma}\langle 
   \mathbb P {\rm div \, } (u \otimes v), g \rangle _{\mathcal Z_\sigma}
:= - \int _{\Omega} (u \otimes v) \cdot \overline{\nabla g }~\mathrm{d}x, 
\quad g \in \mathcal Z_\sigma .  
\] 
\end{enumerate}

\vskip3mm 

\noindent {\bf Remark. } 
If $u$ satisfies $u \cdot \nu = 0$ on the boundary, we can write 
by integration by parts that formally 
\[
\begin{split}
{}_{L^2}({\rm div \, } (u \otimes u) , g) _{L^2} 
=
& \int_{\Omega} \sum_{j,k = 1}^d \partial _{x_k} (u_j u_k) \cdot \overline{g_j} ~\mathrm{d}x
\\
=& - \int_{\Omega} \sum_{j,k = 1}^d  u_j u_k \overline{\partial_{x_k}g_j} ~\mathrm{d}x 
 + \int_{\partial \Omega} \sum_{j=1}^d u_j \overline{g_j} (u \cdot \nu) ~\mathrm{d}S
\\
=& - \int_{\Omega} \sum_{j,k = 1}^d  u_j u_k \overline{\partial_{x_k}g_j} ~\mathrm{d}x .
\end{split}
\]
We also mention that bilinear estimates cannot be obtained in general 
even for the Dirichlet Laplacian, 
and the validity depends on the regularity index (see the paper~\cite{Iw-2023}). 
The definition above allows us to recognize the nonlinear term 
in spaces of distributions.

\subsection{Besov Spaces}\label{subsec:Besov}

We define Besov spaces associated with the Stokes semigroup 
as subspaces of $\mathcal Z'_\sigma$, 
and show their duals.  
We recall our setting that  $\Omega$ is simply connected, and 
note that $\mathcal X_\sigma =  \mathcal Z_\sigma$, 
since the domain $\Omega$ is bounded, the infimum of the spectrum 
of $A_2$ is positive, and 
$j > j_0$ for some $j_0$ is enough on $j\in \mathbb Z$ 
in the definition of $\mathcal Z_\sigma$. 
Here, we choose the homogeneous type.

\vskip3mm 

\noindent {\bf Definition. }
Let $s \in \mathbb R , 1 \leq p, q \leq \infty$. 
We define the Besov space $\dot B^s_{p,q}$ by 
\[
\dot B^s_{p,q} := 
\Big\{ f \in \mathcal Z'_\sigma \, \Big| \, 
\| f \|_{\dot B^s_{p,q}}
:= \Big\|
  \Big\{ 2^{sj} \| \phi_j(\sqrt{A}) f \|_{L^p} \Big\} _{j \in \mathbb Z}
 \Big\|_{\ell ^q (\mathbb Z)}
 < \infty 
\Big\} . 
\]

\noindent 
{\bf Remark. } When $s = 0$ and $p = q = 2$, we have 
$\dot B^0_{2,2} = L^2_\sigma$. 
If $s \gg 1$, then functions in $\dot B^s_{p,q}$ 
are smooth and 
the divergence-free condtion becomes more apparent. 

\begin{lem}\label{lem:0725-5}
Let $u \in \mathcal Z'_\sigma$. Then we have 
\[
u = \sum_{j \in \mathbb Z} \phi_j(\sqrt{A}) u 
 \text{ in } \mathcal Z'_\sigma  , 
 \quad \text{and} \quad  
\phi_j(\sqrt{A}) u \in L^\infty \text{ for all } j \in \mathbb Z.  
\]
\end{lem}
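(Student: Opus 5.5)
The plan is to argue by duality, moving every statement about $u\in\mathcal Z'_\sigma$ onto the test functions $g\in\mathcal Z_\sigma$.

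\textbf{First claim.} By definition,
\[
{}_{\mathcal Z'_\sigma}\langle \phi_j(\sqrt{A})u , g\rangle_{\mathcal Z_\sigma} = {}_{\mathcal Z'_\sigma}\langle u , \phi_j(\sqrt{A_2})g\rangle_{\mathcal Z_\sigma},
\]
since $\phi$ is real-valued. It therefore suffices to show that for every $g\in\mathcal Z_\sigma$,
\[
\sum_{|j|\le N}\phi_j(\sqrt{A_2})g \longrightarrow g \quad \text{in the topology of } \mathcal Z_\sigma \text{ as } N\to\infty .
\]
The claim then follows by continuity of $u$.

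\textbf{Convergence in $L^1$.} Since the spectrum of $A_2$ is contained in $[\lambda_1,\infty)$ with $\lambda_1>0$, the terms with $j<j_0$ vanish identically. On the spectral side, $\sum_{j\in\mathbb Z}\phi_j(\sqrt{\lambda})=1$ on $(0,\infty)$, so $g=\sum_j\phi_j(\sqrt{A_2})g$ in $L^2_\sigma$. For the $L^1$ part of the seminorms $q_M$, I would insert a second cutoff. Let $\Phi_k=\phi_{k-1}+\phi_k+\phi_{k+1}$, so that $\Phi_k\phi_k=\phi_k$. Then
\[
\|\phi_k(\sqrt{A_2})g\|_{L^1}\le C\,2^{-Mk}\,q_M(g).
\]
Hence the series $\sum_j\phi_j(\sqrt{A_2})g$ converges absolutely in $L^1$, and its limit coincides with $g$ because it equals the $L^2$ limit (here $\Omega$ is bounded, so $L^2\subset L^1$).

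\textbf{Convergence of the higher seminorms.} For $R_N:=g-\sum_{|j|\le N}\phi_j(\sqrt{A_2})g$ and $k\in\mathbb N$, the operator $\phi_k(\sqrt{A_2})R_N$ vanishes unless $|k|\ge N-1$, by the support properties of the $\phi_j$. When $|k|\ge N-1$, it equals a combination of at most three operators $\phi_k(\sqrt{A_2})\phi_{k'}(\sqrt{A_2})g$ with $|k-k'|\le1$. Using the uniform bound $\sup_k\|\phi_k(\sqrt{A_2})\|_{L^1\to L^1}<\infty$ from Proposition~\ref{prop:0726-2}, this yields
\[
2^{Mk}\|\phi_k(\sqrt{A_2})R_N\|_{L^1}\le C\,2^{-k}q_{M+1}(g)\le C\,2^{-N}q_{M+1}(g)\to 0 .
\]
The seminorms involving $j\le 0$ are trivial because of the spectral gap. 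This proves $R_N\to0$ in $\mathcal Z_\sigma$ and hence the first claim.

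\textbf{Second claim.} To show $\phi_j(\sqrt{A})u\in L^\infty$, I would use the continuity of $u$: there exist $M$ and $C$ with
\[
|\langle u,h\rangle|\le C\,q_M(h) \quad \text{for all } h\in\mathcal Z_\sigma .
\]
Apply this with $h=\phi_j(\sqrt{A_2})g$. For $h$ of this form, the almost-orthogonality of the cutoffs and Proposition~\ref{prop:0726-2} give
\[
q_M\bigl(\phi_j(\sqrt{A_2})g\bigr)\le C_j\|g\|_{L^1}.
\]
Indeed, only the indices $k$ with $|k-j|\le1$ contribute, each contributing at most $C2^{M|k|}\|g\|_{L^1}$, and the $\|\cdot\|_{L^1}$ term is bounded by the same proposition. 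Consequently $g\mapsto\langle\phi_j(\sqrt{A})u,g\rangle$ extends to a bounded functional on the $L^1$-closure of $\mathcal Z_\sigma$.

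The main obstacle is identifying this functional with an $L^\infty$ function. $\mathcal Z_\sigma$ is only dense in an $L^1$-closure of divergence-free fields, not in all of $L^1$, so the Riesz identification must be made with care. I would first try the following route.

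\begin{itemize}
\item Extend the functional to all of $L^1$ by composing with $\phi_j(\sqrt{A_2})\mathbb P_2$, viewed through its kernel.
\item This kernel is bounded, because $\phi_j(\sqrt{A_2})$ maps $L^1\to L^\infty$ by Proposition~\ref{prop:0726-2} with $r=1$, $p=\infty$.
\item Hahn--Banach then yields a representative in $L^\infty$.
\item Well-definedness on $\mathcal Z_\sigma$ follows from $\mathbb P_2g=g$ there.
\end{itemize}
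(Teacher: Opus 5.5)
Your proposal is correct and is essentially the argument the paper imports from Lemma~4.5 of \cite{IMT-2019}: the resolution follows by duality from $\sum_{|j|\le N}\phi_j(\sqrt{A_2})g\to g$ in $\mathcal Z_\sigma$, and the $L^\infty$ claim follows from the bound $|\langle \phi_j(\sqrt{A})u,g\rangle|\le C\,q_M(\phi_j(\sqrt{A_2})g)\le C_j\|g\|_{L^1}$ for $g\in\mathcal Z_\sigma$. The ``main obstacle'' you flag is not really one, because Hahn--Banach extends this functional from the subspace $\mathcal Z_\sigma\subset L^1$ to all of $L^1$ with the same bound and $(L^1(\Omega))^*=L^\infty(\Omega)$ then gives an $L^\infty$ representative, so the detour through $\phi_j(\sqrt{A_2})\mathbb P_2$ is unnecessary (and its justification as written is loose, since $\mathbb P_2$ is unbounded on $L^1$; you would have to factor $\phi_j=\phi_j\Phi_j$ and obtain $\Phi_j(\sqrt{A_2})\mathbb P_2:L^1\to L^2_\sigma$ by duality from $\Phi_j(\sqrt{A_2}):L^2_\sigma\to L^\infty$).
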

\begin{pf}
We can follow the proof of Lemma~4.5 in~\cite{IMT-2019} to obtain the 
resolution and $\phi_j(\sqrt{A}) u \in L^\infty$. 
\end{pf}

\begin{lem}
Let $1 < p < \infty$. We have an identity that 
\[
\{ f \in \mathcal Z_\sigma ' \,  \mid \, 
  \| f \|_{L^p} < \infty \} 
  = L^p_\sigma , 
\]
where $L^p_\sigma$ is the closure of $C_{0,\sigma}^\infty$ in $L^p$. 
\end{lem}
\begin{pf}
We know that $C_{0,\sigma}^\infty \subset \{ f \in \mathcal Z_\sigma ' \, \mid \, 
  \| f \|_{L^p} < \infty \}$
 and the completion with respect to the $L^p$ norm implies 
 $L^p_\sigma \subset \{ f \in \mathcal Z_\sigma ' \, | \, 
  \| f \|_{L^p} < \infty \}$.  

The converse follows from the continuity property of 
the Stokes semigroup $\{ e^{-tA} \}_{t\geq 0}$ in $L^p$ 
with $1 < p < \infty$, which is already known (see \cite{Miy-1980}). 
In fact, let $ f \in 
\{ f \in \mathcal Z_\sigma ' \, | \,  \| f \|_{L^p} < \infty \}$. 
It follows from Lemma~\ref{lem:0725-5} that 
\[
\displaystyle f_N := \sum_{j \leq N} \phi_j(\sqrt{A})f
\]
approximates $f$ in $\mathcal Z_\sigma '$, 
belongs to $L^p$ due to the boundedness of the domain $\Omega$,  
and 
\[
\displaystyle \sup _{N \in \mathbb N} \| f_N \|_{L^p} < \infty.
\]  
We write 
\[
\begin{split}
f_N - f 
=& f_N - e^{-tA}f_N
+e^{-tA} f - f 
+ e^{-tA} \sum_{j \geq N+1} \phi_j(\sqrt{A}) f
\\
=& \sum_{j \leq N} \phi_j(\sqrt{A}) (f- e^{-tA} f ) 
+ ( e^{-tA} f - f ) 
+ e^{-tA} \sum_{j \geq N+1} \phi_j(\sqrt{A}) f.
\end{split}
\]
We also have that 
\[
\begin{split}
& 
\Big\| 
\sum_{j \leq N} \phi_j(\sqrt{A}) (f- e^{-tA} f ) 
+ ( e^{-tA} f - f ) 
\Big\| _{L^p} 
\leq C \| e^{-tA} f - f \|_{L^p} \to 0 , \quad t \to 0, 
\\
& 
\Big\| e^{-tA} \sum_{j \geq N+1} \phi_j(\sqrt{A}) f
\Big\|_{L^p}
\leq C \sum _{j \geq N+1} e^{-C^{-1}t2^{2j}} \| f \|_{L^p}
\to 0, \quad N \to \infty, \text{ for each } t. 
\end{split}
\]
Here, we have used Proposition~\ref{prop:0726-2} in the first ineqaulity 
above. 
We have applied the following boundedness (see~Lemma~2.1 of \cite{Iw-2018}) 
of a spectral multiplier multiplied by a smooth function $G$
to the second inequality. 
\[
\| G(\sqrt{A}) \phi_j(\sqrt{A}) \|_{L^p \to L^p} 
\leq C \| G(2^j \sqrt{\cdot}\, )  \phi_0(\sqrt{\cdot}\,) \|_{H^s(\mathbb R)}, 
\]
where $s > d/2 + 1$ and 
$H^s(\mathbb R)$ is the Sobolev space on the real line. 
The above two convergence prove that 
the $L^p$-norm of $f_N -f$ converges to zero. 
Since $f_N \in L^p_\sigma$, we conclude that $f \in L^p_{\sigma}$. 
\end{pf}

\begin{prop}Let $1 < p < \infty$. Then 
$\dot B^0_{p,1} \hookrightarrow L^p_\sigma 
\hookrightarrow \dot B^0_{p,\infty}
$. 
\end{prop}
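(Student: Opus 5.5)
We prove the two embeddings separately, using only the uniform $L^p$-boundedness of $\phi_j(\sqrt{A})$ (Proposition~\ref{prop:0726-2}), the resolution of identity in $\mathcal Z'_\sigma$ (Lemma~\ref{lem:0725-5}), and the preceding lemma identifying $\{f\in\mathcal Z'_\sigma : \|f\|_{L^p}<\infty\}$ with $L^p_\sigma$.

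\emph{Second embedding $L^p_\sigma \hookrightarrow \dot B^0_{p,\infty}$.} Let $f\in L^p_\sigma$. Since $L^p\hookrightarrow\mathcal Z'_\sigma$, $f$ is an element of $\mathcal Z'_\sigma$, so $\phi_j(\sqrt A)f$ is defined by duality. We must check that this distributional $\phi_j(\sqrt A)f$ coincides with the $L^p$-extension of $\phi_j(\sqrt{A_2})$ applied to $f$. For $f\in L^2_\sigma\cap L^p$ this follows from self-adjointness of $\phi_j(\sqrt{A_2})$ on $L^2_\sigma$ and $\mathcal Z_\sigma\subset L^2_\sigma$. For general $f\in L^p_\sigma$ it follows by density of $C^\infty_{0,\sigma}$ in $L^p_\sigma$, together with continuity of both sides. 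The distributional side is continuous because the pairing with $\phi_j(\sqrt A)g$ is bounded: $\phi_j(\sqrt A)g\in L^{p'}$ for $g\in\mathcal Z_\sigma$ by Proposition~\ref{prop:0726-2}, using $g\in L^1\cap L^\infty$. Proposition~\ref{prop:0726-2} then gives
\[
\sup_j\|\phi_j(\sqrt A)f\|_{L^p}\le C\|f\|_{L^p},
\]
which is the claimed embedding.

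\emph{First embedding $\dot B^0_{p,1}\hookrightarrow L^p_\sigma$.} Let $f\in\dot B^0_{p,1}$. By Lemma~\ref{lem:0725-5}, $f=\sum_j\phi_j(\sqrt A)f$ in $\mathcal Z'_\sigma$, and each term lies in $L^\infty\subset L^p$ because $\Omega$ is bounded. The series converges absolutely in $L^p$, since $\sum_j\|\phi_j(\sqrt A)f\|_{L^p}=\|f\|_{\dot B^0_{p,1}}<\infty$. Call the $L^p$-limit $F$, so that $\|F\|_{L^p}\le\|f\|_{\dot B^0_{p,1}}$. Convergence in $L^p$ implies convergence in $\mathcal Z'_\sigma$ by the embedding $L^p\hookrightarrow\mathcal Z'_\sigma$. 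Hence $F=f$ in $\mathcal Z'_\sigma$ by uniqueness of limits, and $\|f\|_{L^p}<\infty$. The preceding lemma now gives $f\in L^p_\sigma$ with the norm bound.

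\emph{Main obstacle.} The only delicate point is consistency: the operator $\phi_j(\sqrt A)$ defined by duality on $\mathcal Z'_\sigma$ must agree with the $L^p$-bounded extension on $L^p_\sigma$. The same issue arises for elements of $L^p$ that are not divergence free, since $\mathcal Z'_\sigma$ pairs only against solenoidal test functions. We handle this by the density and duality argument above. We use that $\phi_j(\sqrt{A_2})$ is symmetric because $\phi$ is real-valued, and that $\mathcal Z_\sigma$ is invariant under $\phi_j(\sqrt{A_2})$.
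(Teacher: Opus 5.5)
Your proof is correct and takes the same route as the paper. The first embedding comes from summing the resolution of Lemma~\ref{lem:0725-5} in $L^p$, with the preceding lemma identifying $L^p$-bounded elements of $\mathcal Z'_\sigma$ with $L^p_\sigma$, and the second comes from the uniform bound of Proposition~\ref{prop:0726-2}. You also carry out two points the paper leaves implicit: that the duality-defined $\phi_j(\sqrt A)$ agrees with its $L^p$-bounded extension, and that the $L^p$-limit of the series equals $f$ in $\mathcal Z'_\sigma$; your handling of both is sound.
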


\begin{pf}
Let $u \in \dot B^0_{p,1}$. By the resolution in Lemma~\ref{lem:0725-5}, 
we have 
\[
\| u \|_{L^p} 
\leq \sum_{j \in \mathbb Z} \| \phi_j(\sqrt{A}) u \|_{L^p} 
= \| u \|_{\dot B^0_{p,1}},
\]
which proves $\dot B^0_{p,1} \hookrightarrow L^p_\sigma $. 
The second embedding is obtained by Proposition~\ref{prop:0726-2}. 
\end{pf}

\begin{prop}\label{prop:0724-1}
Let $s \in \mathbb R, 1 \leq p,q < \infty$. Then 
the dual space of $\dot B^s_{p,q}$ is 
$\dot B^{-s}_{p',q'}$, where $p',q'$ satisfy 
\[
\dfrac{1}{p} + \dfrac{1}{p'} = \dfrac{1}{q} + \dfrac{1}{q'} = 1. 
\]
We also have that 
\[
\| f \|_{\dot B^{-s}_{p',q'}} 
\simeq \sup_{\| g \|_{\dot B^s_{p,q}}=1}  
\big| {}_{\mathcal Z'_\sigma}\langle f, g \rangle _{\mathcal Z_\sigma} 
\big| 
= 
\sup_{\| g \|_{\dot B^s_{p,q}}=1}  
\Big| \sum_{j \in \mathbb Z} \int _{\Omega}
  \phi_j(\sqrt{A}) f \cdot 
  \overline{\Phi_j (\sqrt{A})g} ~\mathrm{d}x 
\Big| ,
\]
where $\Phi_j := \phi_{j-1} + \phi_j + \phi_{j+1}$. 
\end{prop}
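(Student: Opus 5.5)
We follow the standard retraction argument used for Besov spaces on $\mathbb R^d$ and in \cite{IMT-2019}: $\dot B^s_{p,q}$ is a retract of the sequence space $\ell^q_s(L^p)$, and duality is transferred from that space.

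\emph{Step 1 (pairing bound).} For $f\in\dot B^{-s}_{p',q'}$ and $g\in\mathcal Z_\sigma$ we have $\phi_j\Phi_j=\phi_j$. By Lemma~\ref{lem:0725-5} and the self-adjointness of $\phi_j(\sqrt{A})$, this gives
\[
{}_{\mathcal Z'_\sigma}\langle f,g\rangle_{\mathcal Z_\sigma}=\sum_j {}_{\mathcal Z'_\sigma}\langle \phi_j(\sqrt A)f,g\rangle=\sum_j\int_\Omega \phi_j(\sqrt A)f\,\overline{\Phi_j(\sqrt A)g}\,dx .
\]
Here we use that $\Phi_j(\sqrt A)g\in\mathcal Z_\sigma$ and that $\phi_j(\sqrt A)f\in L^\infty$ is a genuine function. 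H\"older's inequality in $x$ and then in $j$ bounds the sum by $\|f\|_{\dot B^{-s}_{p',q'}}\sum_{|k|\le1}\|\{2^{sj}\|\phi_{j+k}(\sqrt A)g\|_{L^p}\}\|_{\ell^q}\le C\|f\|_{\dot B^{-s}_{p',q'}}\|g\|_{\dot B^s_{p,q}}$. Since $q<\infty$ and the partial sums $\sum_{|j|\le N}\phi_j(\sqrt A)g$ converge in $\dot B^s_{p,q}$, $\mathcal Z_\sigma$ is dense in $\dot B^s_{p,q}$. The right-hand sum itself converges absolutely for all $g\in\dot B^s_{p,q}$. Hence $f$ defines a bounded functional on $\dot B^s_{p,q}$ of norm at most $C\|f\|$.

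\emph{Step 2 (lower bound).} To show $\|f\|_{\dot B^{-s}_{p',q'}}\lesssim\sup|\langle f,g\rangle|$, fix a finite $N$. For $|j|\le N$ choose $h_j\in L^p$ with $\|h_j\|_{L^p}=1$ nearly norming $\phi_j(\sqrt A)f$ in $L^{p'}$. We may take $h_j$ bounded and compactly supported, hence in $L^2$. Choose scalars $a_j\ge0$ with $\|\{2^{-sj}a_j\}\|_{\ell^q}=1$ nearly norming the $\ell^{q'}$ sequence. Set $g=\sum_{|j|\le N}a_j\mathbb P\,\phi_j(\sqrt A)h_j$; note $\phi_j(\sqrt A)$ acts on $L^2_\sigma$, so we project first. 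Pairing gives $\sum_j a_j\int\phi_j(\sqrt A)f\,\overline{h_j}$ because $\phi_j(\sqrt A)f\in L^2_\sigma$, so $\mathbb P$ drops. Meanwhile $\|\phi_k(\sqrt A)g\|_{L^p}\le C\sum_{|j-k|\le1}a_j\|\mathbb P h_j\|_{L^p}$ by Proposition~\ref{prop:0726-2}, and $\mathbb P$ is bounded on $L^p$ for $1<p<\infty$. Thus $\|g\|_{\dot B^s_{p,q}}\le C$, and letting $N\to\infty$ yields the bound.

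\emph{Step 3 (surjectivity).} Let $T\in(\dot B^s_{p,q})'$. The map $J:g\mapsto\{\phi_j(\sqrt A)g\}_j$ is an isometry into $\ell^q_s(L^p)$. By Hahn--Banach, extend $T\circ J^{-1}$ to $\ell^q_s(L^p)$; since $p,q<\infty$ its dual is $\ell^{q'}_{-s}(L^{p'})$, so the extension is represented by a sequence $(F_j)$. Define $f:=\sum_j\phi_j(\sqrt A)\mathbb P F_j$ as an element of $\mathcal Z'_\sigma$. For $g\in\mathcal Z_\sigma$ one checks $\langle f,g\rangle=\sum_j\int F_j\overline{\phi_j(\sqrt A)g}=T(g)$, where $\mathbb Pg=g$. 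The estimate $\|\phi_k(\sqrt A)f\|_{L^{p'}}\le C\sum_{|j-k|\le1}\|F_j\|_{L^{p'}}$ then gives $f\in\dot B^{-s}_{p',q'}$.

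\emph{Main obstacle.} The delicate points all concern the endpoint $p=1$ (so $p'=\infty$), where $\mathbb P$ is not bounded on $L^1$ or $L^\infty$. The fix I would try first is to avoid $\mathbb P$: since $\phi_j(\sqrt A)$ is initially defined on $L^2_\sigma$, I would use the $L^p$ extension of the composite operator $\phi_j(\sqrt A)\mathbb P$ itself. Its kernel is that of $\phi_j(\sqrt{A})$ viewed on all of $L^2$, because the Neumann Laplacian commutes with $\mathbb P$. Uniform $L^r$ bounds for it then follow from Gaussian bounds exactly as in Proposition~\ref{prop:0726-2}. I would also verify that the series defining $f$ converges in $\mathcal Z'_\sigma$ using the seminorms $q_M$.
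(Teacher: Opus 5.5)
For $1<p<\infty$ your argument is the standard retraction/Hahn--Banach proof, essentially the one the paper defers to via Proposition~3.1 of \cite{IMT-2019}, and in that range it goes through. There is one sign slip in Step~2: you must normalize $\|\{2^{sj}a_j\}\|_{\ell^q}=1$, so that $\|g\|_{\dot B^s_{p,q}}\le C$ while $\sum_j a_j\|\phi_j(\sqrt{A})f\|_{L^{p'}}$ nearly attains $\|f\|_{\dot B^{-s}_{p',q'}}$. The gap is the endpoint $p=1$, which the statement includes. There Steps~2 and~3 both need $\sup_j\|\phi_j(\sqrt{A})\mathbb P\|_{L^1\to L^1}<\infty$ (equivalently on $L^\infty$, the operator being self-adjoint), and the justification you give for this is false.

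Let $-\Delta_N$ be the Neumann (Hodge) Laplacian on all of $L^2$, and $-\Delta_N^0$ the scalar Neumann Laplacian. Commutation with $\mathbb P$ gives $\phi_j(\sqrt{A})\mathbb P=\phi_j(\sqrt{-\Delta_N})\mathbb P$, not $\phi_j(\sqrt{A})\mathbb P=\phi_j(\sqrt{-\Delta_N})$. The reason is that $\phi_j(\sqrt{-\Delta_N})$ also acts nontrivially on the gradient part of $L^2$, since $-\Delta_N\nabla\psi=\nabla(-\Delta_N^0\psi)$, whereas $\mathbb P$ kills that part. Write $\mathbb P=I-\nabla(\Delta_N^0)^{-1}\mathrm{div}$, with $\mathrm{div}$ taken in the weak sense of the Neumann problem of Subsection~\ref{subsec:basic}. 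Then
\[
\phi_j(\sqrt{A})\mathbb P=\phi_j(\sqrt{-\Delta_N})-\nabla\,\phi_j(\sqrt{-\Delta_N^0})\,(\Delta_N^0)^{-1}\,\mathrm{div}.
\]
Only the first term is controlled by Lemma~\ref{lem:0726-1} and the argument of Proposition~\ref{prop:0726-2}. The second is a frequency-localized double Riesz transform, so the kernel of the composite is not the Gaussian-controlled one. Already in $\mathbb R^d$, the kernel of $e^{t\Delta}\mathbb P$ is the Oseen kernel, which decays only like $|x-y|^{-d}$.

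A uniform $L^1$ bound for the second term needs an extra ingredient. One option is Gaussian gradient bounds $|\nabla_x p^N_t(x,y)|\le Ct^{-\frac{d+1}{2}}e^{-|x-y|^2/(Ct)}$, $0<t\le 1$, for the scalar Neumann heat kernel. With these, write $\phi_j(\lambda)\lambda^{-2}=2^{-2j}\tilde\phi(2^{-j}\lambda)$ and split off $e^{2^{-2j}\Delta_N^0}$ on both sides. Then use $\|\nabla e^{t\Delta_N^0}\|_{L^1\to L^1}+\|e^{t\Delta_N^0}\mathrm{div}\|_{L^1\to L^1}\le Ct^{-1/2}$; only finitely many $j<0$ contribute, by the spectral gap. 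Neither Lemma~\ref{lem:0726-1} nor Proposition~\ref{prop:0726-2} supplies such a bound.

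The paper's own proof does not treat this point either, since the cited scalar result involves no projection. However, the proof of Theorem~\ref{thm} uses the duality only with exponent $p'\in(1,\infty)$ and $q=1$, where your argument is complete.
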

\begin{pf}
The duality property is proved in the same way as in 
Proposition~3.1 of \cite{IMT-2019}. 
As for the norm equivalence, 
the first relation follows from the duality property 
\[
\dot B^{-s}_{p',q'} = (\dot B^s_{p,q})^* . 
\]
The second equality  is obtained by the resolution of the identity 
in Lemma~\ref{lem:0725-5} to $f$ 
and the resoluton in $L^2$ for $g$, which also applies 
to $g$ in $\mathcal Z_\sigma$. 
\end{pf}

\subsection{Stokes Semigroup}\label{subsec:StokesSemi}
We introduce the Stokes semigroup and several estimates 
used in the proof of our theorem. 

\vskip3mm 

\noindent {\bf Definition. } 
Let $t \geq 0$. 
We define $e^{-tA}$ as a map from $\mathcal Z_\sigma ' $ to itself by 
\[
{}_{\mathcal Z_\sigma '} \langle e^{-tA} f , g\rangle _{\mathcal Z_\sigma}
= \sum_{j \in \mathbb Z} 
{}_{\mathcal Z_\sigma '} \langle  f , e^{-tA} \phi_j(\sqrt{A})g\rangle _{\mathcal Z_\sigma}, \quad g \in \mathcal Z_\sigma . 
\]

\begin{prop}\label{prop:0726-3}
Let $s \in \mathbb R , 1 \leq p,q \leq \infty$. 
\begin{enumerate}
\item For every $t \geq 0$ and $f \in \dot B^s_{p,q}$, we have 
\[
\| e^{-tA} f \|_{\dot B^s_{p,q}} \leq C \| f \|_{\dot B^s_{p,q}}.
\]
\item 
If $q < \infty$, then we have the continuity that 
for every $f \in \dot B^s_{p,q}$, 
\[
\lim_{ t\to0} \| e^{-tA} f - f \|_{\dot B^s_{p,q}} = 0 .  
\]
If $q = \infty$, then we have the continuity of the weak-$*$ sense that 
for every $f \in \dot B^s_{p,\infty}$,  
\[
\lim_{t \to 0} 
{}_{\dot B^{s}_{p,\infty}}\langle e^{-tA}f -f, g \rangle _{\dot B^{-s}_{p',1}} 
= \lim_{t\to0}\sum_{j \in \mathbb Z} 
 \int_{\Omega} \phi_j(\sqrt{A}) (e^{-tA}f -f)\cdot \overline{\Phi_j(\sqrt{A})g}~dx
 =0,
\]
for all $g \in \dot B^{-s}_{p',1}$, where 
$1/p + 1/p' = 1$ and $ \Phi_j= \phi_{j-1} + \phi_j + \phi_{j+1}$. 
\item 
If $s \geq s_0, p \leq p_0, 
s-s_0 - d(1/p - 1/p_0) > 0$, then 
for every $f \in \dot B^{s_0}_{p_0,\infty}$, 
\[
\| e^{-tA} f \|_{\dot B^s_{p,1}} \leq 
Ct^{-\frac{s-s_0}{2} - \frac{d}{2} (\frac{1}{p} - \frac{1}{p_0}) } \| f \|_{\dot B^{s_0}_{p_0,\infty }}, \quad t > 0 . 
\]
\end{enumerate}
\end{prop}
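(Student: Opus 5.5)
The plan is to reduce all three parts to frequency-localized estimates for a single block $\phi_j(\sqrt{A})$ and then sum in $j$. The basic tool is the spectral multiplier bound quoted in the proof of the preceding lemma (Lemma~2.1 of \cite{Iw-2018}):
\[
\|G(\sqrt{A})\phi_j(\sqrt{A})\|_{L^p\to L^p}\le C\|G(2^j\sqrt{\cdot}\,)\phi_0(\sqrt{\cdot}\,)\|_{H^s(\mathbb R)},\qquad s>d/2+1 .
\]
We also use $\phi_j=\phi_j\Phi_j$ on $(0,\infty)$, so that $\phi_j(\sqrt{A})e^{-tA}f = e^{-tA}\Phi_j(\sqrt{A})\,\phi_j(\sqrt{A})f$ on $\mathcal Z'_\sigma$. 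This commutation follows from the duality definitions of $e^{-tA}$ and $\phi_j(\sqrt A)$ on $\mathcal Z'_\sigma$, together with Lemma~\ref{lem:0725-5}.

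\textbf{Block estimate.} Take $G(\lambda)=e^{-t\lambda^2}$. Then $G(2^j\sqrt{\mu})\Phi_0(\sqrt{\mu})=e^{-t4^j\mu}\Phi_0(\sqrt\mu)$, and its $H^s$ norm is bounded by $C$ uniformly in $t\ge0$ and $j$. It is also bounded by $Ce^{-ct4^j}$, since $\mu$ ranges over a compact subset of $(0,\infty)$ and $t4^j$-derivatives only produce polynomial factors absorbed by the exponential. Hence
\[
\|\phi_j(\sqrt A)e^{-tA}f\|_{L^r}\le Ce^{-ct4^j}\|\phi_j(\sqrt A)f\|_{L^r},\qquad 1\le r\le\infty .
\]
Taking the $\ell^q$ norm with weights $2^{sj}$ immediately gives (1).

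\textbf{Part (2).} For $q<\infty$ we write $\|e^{-tA}f-f\|_{\dot B^s_{p,q}}^q=\sum_j 2^{sjq}\|(e^{-tA}-1)\phi_j(\sqrt A)f\|_{L^p}^q$. Each term is bounded by $C2^{sjq}\|\phi_j(\sqrt A)f\|_{L^p}^q$, which is summable. Each term also tends to $0$ as $t\to0$, because the $H^s$ norm of $(e^{-t4^j\mu}-1)\Phi_0(\sqrt\mu)$ is $O(t4^j)$. Dominated convergence in $\ell^q$ concludes.

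For $q=\infty$ we pair with $g\in\dot B^{-s}_{p',1}$, using the expression in Proposition~\ref{prop:0724-1}. The $j$-th term is bounded by $C\,2^{sj}\|\phi_j f\|_{L^p}\,2^{-sj}\|\Phi_j g\|_{L^{p'}}$, which is summable in $j$ because $g\in \dot B^{-s}_{p',1}$. Each term also tends to zero by the same $O(t4^j)$ bound. Dominated convergence for series again gives the claim.

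\textbf{Part (3), literally with $p\le p_0$.} Since $\Omega$ is bounded and $p\le p_0$, Hölder gives $\|h\|_{L^p}\le|\Omega|^{\frac1p-\frac1{p_0}}\|h\|_{L^{p_0}}$. Combined with the block estimate this yields
\[
2^{sj}\|\phi_j e^{-tA}f\|_{L^p}\le C2^{(s-s_0)j}e^{-ct4^j}\|f\|_{\dot B^{s_0}_{p_0,\infty}} .
\]
The key structural fact is that the spectrum of $A$ is bounded below by a positive constant, by the Assumption. So $\phi_j(\sqrt A)=0$ for $j<j_0$, and only $j\ge j_0$ contribute. Set $\alpha=\frac{s-s_0}2+\frac d2(\frac1p-\frac1{p_0})>0$, which is positive by hypothesis. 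We then split into two regimes.

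\emph{Case $t\ge 4^{-j_0}$.} The sum $\sum_{j\ge j_0}2^{(s-s_0)j}e^{-ct4^j}$ is bounded by $Ce^{-c't}$, which in turn is bounded by $Ct^{-\alpha}$.

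\emph{Case $t<4^{-j_0}$, with $s>s_0$.} The standard dyadic computation gives $\sum_j 2^{(s-s_0)j}e^{-ct4^j}\le Ct^{-\frac{s-s_0}2}$. This is at most $C t^{-\alpha}$ because $t<4^{-j_0}$ and $\frac1p-\frac1{p_0}\ge0$.

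\emph{Case $t<4^{-j_0}$, with $s=s_0$.} Here the hypothesis forces $p<p_0$. The sum is $\sum_{j\ge j_0}e^{-ct4^j}\le C(1+\log(1/t))$, which is at most $Ct^{-\alpha}$ since $\alpha>0$.

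The main point to watch is precisely this use of boundedness of $\Omega$ and of the spectral gap. These replace the Bernstein-type gain from Proposition~\ref{prop:0726-2} that would be needed for $p\ge p_0$. The only delicate step is the uniform-in-$(t,j)$ exponential $H^s$ bound for the multiplier, which I expect to be routine.
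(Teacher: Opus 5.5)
Parts (1) and (2) are correct and follow the argument the paper has in mind when it defers to \cite{Iw-2018}. You factor $\phi_j(\sqrt A)e^{-tA}=e^{-tA}\Phi_j(\sqrt A)\,\phi_j(\sqrt A)$ and bound the block multiplier by $Ce^{-ct4^j}$, or by $O(t4^j)$ for $e^{-tA}-1$, via the $H^s$ multiplier lemma. Summing in $\ell^q$, or using dominated convergence, then concludes.

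Part (3) has a real gap. Your H\"older-plus-spectral-gap argument does prove the inequality as printed ($p\le p_0$, time exponent with $\frac1p-\frac1{p_0}$). But the printed version is a misprint, and it is not the estimate the paper uses. Two things show this.
\begin{itemize}
\item Under the printed hypotheses, $s-s_0>d(\frac1p-\frac1{p_0})\ge0$. So your case $s=s_0$ is empty: there the hypothesis forces $p>p_0$, not $p<p_0$. The condition $s-s_0-d(\frac1p-\frac1{p_0})>0$ is the natural one for $p\ge p_0$.
\item In Step~2 of Section~3 the paper applies (3) from $\dot B^{1-\frac dp}_{p',1}$ into $\dot B^1_{(\frac p2)',1}$, with $(\frac p2)'>p'$. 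It obtains the factor $(t-\tau)^{-\frac d{2p}-\frac d2(\frac1{p'}-\frac1{(p/2)'})}=(t-\tau)^{-\frac dp}$.
\end{itemize}
So what is actually needed is: for $p\ge p_0$ and $s-s_0+d(\frac1{p_0}-\frac1p)>0$,
\[
\|e^{-tA}f\|_{\dot B^s_{p,1}}\le Ct^{-\frac{s-s_0}2-\frac d2(\frac1{p_0}-\frac1p)}\|f\|_{\dot B^{s_0}_{p_0,\infty}} .
\]
You remark that $p\ge p_0$ would require a Bernstein-type gain; that is exactly the case the paper relies on. Your route cannot reach it, because H\"older on a bounded domain only lowers the integrability exponent.

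The missing ingredient is the $L^r\to L^p$ bound in the second part of Proposition~\ref{prop:0726-2}. Write
\[
\phi_j(\sqrt A)e^{-tA}f=\Phi_j(\sqrt A)\big(e^{-tA}\Phi_j(\sqrt A)\big)\phi_j(\sqrt A)f .
\]
Combine $\|\Phi_j(\sqrt A)\|_{L^{p_0}\to L^p}\le C2^{d(\frac1{p_0}-\frac1p)j}$ with your block estimate in $L^{p_0}$ to get
\[
2^{sj}\|\phi_j(\sqrt A)e^{-tA}f\|_{L^p}\le C\,2^{(s-s_0+d(\frac1{p_0}-\frac1p))j}e^{-ct4^j}\|f\|_{\dot B^{s_0}_{p_0,\infty}} .
\]
Then sum over all $j\in\mathbb Z$. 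Because the exponent of $2^j$ is positive, the sum is at most $Ct^{-\frac{s-s_0}2-\frac d2(\frac1{p_0}-\frac1p)}$, with no need for the spectral gap or $|\Omega|<\infty$. This is the argument of \cite{Iw-2018}, and it also covers the case $p=p_0$ used in Step~1.
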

\begin{pf}
The proof is analogous to that in~\cite{Iw-2018}, which studies the 
heat semigroup for the Dirichlet Laplacian on domains. 
\end{pf}

\begin{prop}\label{prop:0724-2}
Let $1 < p < \infty$. Then there exists a positive constant $C$ such that 
for every $f \in L^p$, 
\[
\|\nabla e^{-tA} f\|_{L^p} 
+ \|e^{-tA} \mathbb P \nabla f\|_{L^p} \leq C t^{-\frac{1}{2}}\| f \|_{L^p}, 
\quad t > 0.
\]
Let $1 \leq p \leq r \leq \infty$. Then there exists a positive constant 
$C$ such that for every $f \in L^p$,
\[
\|e^{-tA} f\|_{L^r} \leq C t^{-\frac{d}{2}(\frac{1}{p}-\frac{1}{r})}\| f \|_{L^p}, 
\quad t > 0. 
\]
\end{prop}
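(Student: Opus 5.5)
The plan is to prove the $L^p$--$L^r$ smoothing first and then the two gradient bounds, everything resting on the Gaussian bound of Lemma~\ref{lem:0726-1}. For $1\le p\le r\le\infty$, Young's inequality applied to the kernel estimate gives
\[
\|e^{-tA}f\|_{L^r}\le \sup_x\Big\|\tfrac{C}{t^{d/2}}e^{-\frac{|x-\cdot|^2}{Ct}}\Big\|_{L^{\rho}}\|f\|_{L^p}\le Ct^{-\frac d2(\frac1p-\frac1r)}\|f\|_{L^p},
\qquad 1+\tfrac1r=\tfrac1\rho+\tfrac1p .
\]
Here the Gaussian is extended by zero outside $\Omega$, so the bound is uniform in $x$. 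One point to check is that $e^{-tA}$ acting on general $f\in L^p$ (not only on $L^2_\sigma$) agrees with the integral operator given by this kernel. Since the Neumann Laplacian on $1$-forms commutes with $\mathbb P$, we have $e^{-tA}\mathbb P=\mathbb P e^{t\Delta_N}$, and I take $e^{-tA}$ on $L^p$ to mean $e^{-tA}\mathbb P_p$, whose kernel is the one in Lemma~\ref{lem:0726-1}. This gives the second inequality.

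For $\|\nabla e^{-tA}f\|_{L^p}$ with $1<p<\infty$, I factor
\[
\nabla e^{-tA}=\big(\nabla A_p^{-1/2}\big)\big(A_p^{1/2}e^{-tA}\big).
\]
By Proposition~\ref{prop:0809-1}(2) the first factor is bounded on $L^p$. For the second factor I need $\|A^{1/2}e^{-tA}\|_{L^p\to L^p}\le Ct^{-1/2}$. There are two ways to get it.
\begin{itemize}
\item Since $e^{-tA}$ is an analytic semigroup on $L^p_\sigma$, we have $\|Ae^{-tA}\|\le Ct^{-1}$, and the moment inequality $\|A^{1/2}v\|\le C\|Av\|^{1/2}\|v\|^{1/2}$ gives $Ct^{-1/2}$.
\item Alternatively, decompose $A^{1/2}e^{-tA}=\sum_j A^{1/2}e^{-tA}\phi_j(\sqrt A)$ and use the multiplier bound quoted in the proof of the preceding lemma (Lemma~2.1 of \cite{Iw-2018}) with $G(\lambda)=\lambda e^{-t\lambda^2}$. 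This gives $\|G(\sqrt A)\phi_j(\sqrt A)\|_{L^p\to L^p}\le C2^j(1+t2^{2j})^{-N}$. Summing over $j$ (for $j$ above the bottom of the spectrum, which is positive by the Assumption) gives $Ct^{-1/2}$.
\end{itemize}
I would use the second route, since it only uses tools already in the paper.

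For $e^{-tA}\mathbb P\nabla f$ I argue by duality. For $g\in C^\infty_{0,\sigma}$,
\[
\langle e^{-tA}\mathbb P\nabla f,g\rangle=\langle \nabla f,e^{-tA}g\rangle=-\langle f,\operatorname{div}(e^{-tA}g)\rangle
\]
for the scalar case, or $-\langle f,\nabla e^{-tA}g\rangle$ for the tensor case $\nabla\cdot(u\otimes u)$, as in the definition of $\mathbb P\,\mathrm{div}$. This uses the self-adjointness of $e^{-tA}$ together with $\mathbb P e^{-tA}g=e^{-tA}g$. The boundary term vanishes because the boundary conditions in $D(A)$ give $e^{-tA}g\cdot\nu=0$ on $\partial\Omega$, which is exactly the computation in the Remark of subsection~\ref{proj_nonlinear}. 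Then the already proved gradient bound in $L^{p'}$ (valid since $1<p'<\infty$) gives $|\langle\cdot,g\rangle|\le Ct^{-1/2}\|f\|_{L^p}\|g\|_{L^{p'}}$, and density of $C^\infty_{0,\sigma}$ in $L^{p'}_\sigma$ completes this part.

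The main obstacle is the gradient bound $\|\nabla e^{-tA}\|_{L^p\to L^p}\lesssim t^{-1/2}$, and in particular the uniform control of $A^{1/2}e^{-tA}$ on $L^p$. That requires either analyticity of the Stokes semigroup on $L^p$ with these Neumann-type conditions, or the spectral multiplier theorem extended from Proposition~\ref{prop:0726-2}; I would rely on the latter.
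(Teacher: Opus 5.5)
Your three steps are the paper's proof:
\begin{itemize}
\item the Gaussian bound plus Young for the $L^p$--$L^r$ estimate;
\item boundedness of $\nabla A_p^{-1/2}$ composed with $\|A^{1/2}e^{-tA}\|_{L^p\to L^p}\le Ct^{-1/2}$ (your dyadic sum with $G(\lambda)=\lambda e^{-t\lambda^2}$ spells out what the paper takes from Propositions~\ref{prop:0809-1} and~\ref{prop:0726-2});
\item duality against the gradient bound in $L^{p'}$.
\end{itemize}

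The one step that is false is how you reconcile the kernel with $e^{-tA}$ on non-solenoidal data. The Gaussian kernel $K_t$ of Lemma~\ref{lem:0726-1} comes from the Neumann Laplacian $\Delta_N$ on $1$-forms, which commutes with $\mathbb P$. The identity $\int_\Omega K_t(\cdot,y)f(y)\,dy=e^{-tA}\mathbb P f$ holds only for solenoidal $f$, so $K_t$ is not the kernel of $e^{-tA}\mathbb P_p$. Indeed $K_tf-e^{-tA}\mathbb P f=e^{t\Delta_N}(I-\mathbb P)f$ is a gradient that is nonzero in general.

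In fact $e^{-tA}\mathbb P$ cannot have a Gaussian kernel at all. If it did, it would be bounded on $L^\infty$ uniformly in $t$, and letting $t\to0$ would make $\mathbb P$ bounded on $L^\infty$. That fails, because for data supported away from $\partial\Omega$ the operator $I-\mathbb P$ agrees with the second-order Riesz transforms $\nabla\Delta^{-1}\mathrm{div}$ up to a smoothing operator. So if $e^{-tA}f$ is read as the solenoidal field $e^{-tA}\mathbb P f$, the cases $p=r\in\{1,\infty\}$ of the statement are actually false. For $p\in\{1,\infty\}$ the projection $\mathbb P_p$ is not even available.

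The repair is easy. For $1<p<\infty$, write $e^{-tA}\mathbb P_pf=K_t(\mathbb P_pf)$ and use $\|\mathbb P_pf\|_{L^p}\le C\|f\|_{L^p}$. For the full range, apply $K_t$ to $f$ itself and regard $e^{-tA}f$ as an element of $\mathcal Z'_\sigma$; there it coincides with $K_tf$, because gradients pair to zero with solenoidal test functions. Alternatively, restrict to solenoidal $f$, which is all that Section~3 uses. The paper's one-line proof is silent on this point.

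There is also a smaller slip in the duality step. For a general tensor $F$, the boundary term $\int_{\partial\Omega}(F\nu)\cdot\overline{e^{-tA}g}\,dS$ is not killed by $e^{-tA}g\cdot\nu=0$. In the Remark of subsection~\ref{proj_nonlinear} it vanishes because $F=u\otimes u$ with $u\cdot\nu=0$, not because of the test function. You need no integration by parts, though. $\mathbb P\,\mathrm{div}\,F$ is defined as the functional $g\mapsto-\int_\Omega F\cdot\overline{\nabla g}\,dx$, so $\langle e^{-tA}\mathbb P\,\mathrm{div}\,F,g\rangle=-\int_\Omega F\cdot\overline{\nabla e^{-tA}g}\,dx$ holds by definition. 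Your $L^{p'}$ gradient bound then finishes the argument exactly as in the paper. Your scalar case is vacuous, since $\mathbb P\nabla f=0$.
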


\begin{pf}
It follows from Propositions~\ref{prop:0809-1}, \ref{prop:0726-2} 
(see also Proposition~\ref{prop:0726-3}) that  
\[
\|\nabla e^{-tA} f\|_{L^p} 
\leq C \| A^{\frac{1}{2}} e^{-tA} f \|_{L^p}
\leq C t^{-\frac{1}{2}} \| f \|_{L^p} .
\]
We also have from the duality property 
and the gradient estimate in $L^{p'}$ that 
\[
\begin{split}
\|e^{-tA} \mathbb P \nabla f\|_{L^p}
=& \sup_{g \in \mathcal Z_\sigma, \| g \|_{L^{p'}}=1} 
 \Big| {}_{\mathcal Z'_\sigma} \langle e^{-tA} \mathbb P \nabla f, g
       \rangle _{\mathcal Z_\sigma} 
 \Big| 
= \sup_{g \in \mathcal Z_\sigma, \| g \|_{L^{p'}}=1} 
 \Big| {}_{\mathcal Z'_\sigma} \langle f, \nabla e^{-tA} g
       \rangle _{\mathcal Z_\sigma} 
 \Big| 
\\
\leq 
& C t^{-\frac{1}{2}}\| f \|_{L^{p'}},
\end{split}
\]
where $1/ p + 1/p' = 1$. 
The estimate of the $L^r$ norm by the $L^p$ norm with $ r \leq p$ 
follows from the Gaussian upper bound in Lemma~\ref{lem:0726-1} 
and the Young convolution inequality. 
\end{pf}

\begin{prop}\label{prop:0724-3}
Let $1 < p < \infty$. Then there exists a positive constant $C$ such that 
for every $f \in \dot B^1_{p,1}$,  
\[
\| \nabla f \|_{L^p} \leq C \| f \|_{\dot B^1_{p,1}}. 
\]
\end{prop}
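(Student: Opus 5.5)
We decompose $f$ with the Littlewood--Paley resolution and control each dyadic piece by Proposition~\ref{prop:0809-1}\,(2) and Proposition~\ref{prop:0726-2}. By Lemma~\ref{lem:0725-5},
\[
f=\sum_{j\in\mathbb Z}\phi_j(\sqrt A)f \quad\text{in } \mathcal Z'_\sigma .
\]
Since $f\in\dot B^1_{p,1}$, the series $\sum_j \nabla\phi_j(\sqrt A) f$ will be shown to converge absolutely in $L^p$. Its limit then coincides with $\nabla f$ in the sense of distributions on $\Omega$, because $\nabla$ is continuous from $\mathcal Z'_\sigma$, restricted to $C_0^\infty$ test fields, to distributions. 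We therefore only need the dyadic bound
\[
\|\nabla\phi_j(\sqrt A)f\|_{L^p}\le C2^j\|\phi_j(\sqrt A)f\|_{L^p}, \qquad j\in\mathbb Z,
\]
with $C$ independent of $j$. Summing over $j$ then gives $\|\nabla f\|_{L^p}\le C\|f\|_{\dot B^1_{p,1}}$.

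For the dyadic bound, write $\phi_j(\sqrt A)=\Phi_j(\sqrt A)\phi_j(\sqrt A)$ with $\Phi_j=\phi_{j-1}+\phi_j+\phi_{j+1}$; this holds because $\Phi_j\equiv1$ on $\operatorname{supp}\phi_j$. Next factor
\[
\nabla\Phi_j(\sqrt A)=\nabla A^{-\frac12}\cdot \big(\sqrt A\,\Phi_j(\sqrt A)\big).
\]
The first factor is bounded on $L^p$ by Proposition~\ref{prop:0809-1}\,(2), which requires $1<p<\infty$ and uses that $0$ lies outside the spectrum (Assumption). For the second factor, write $\sqrt A\,\Phi_j(\sqrt A)=2^j\,\psi(2^{-j}\sqrt A)$ with $\psi(\lambda)=\lambda\Phi_0(\lambda)\in C_0^\infty(\mathbb R)$. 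Proposition~\ref{prop:0726-2} then gives $\|\psi(2^{-j}\sqrt A)\|_{L^p\to L^p}\le C$ uniformly in $j$. Applying both factors to $g=\phi_j(\sqrt A)f$, which lies in $L^p\cap L^2_\sigma$ (it is in $L^\infty$ by Lemma~\ref{lem:0725-5}, and $\Omega$ is bounded), yields the dyadic bound.

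The main technical point is justifying these operator identities on $g=\phi_j(\sqrt A)f$. One has to check that $g$ lies in $L^2_\sigma\cap L^p$, so that the spectral calculus on $L^2_\sigma$ and the $L^p$ bound for $\nabla A^{-1/2}$ (first proved on a dense set, then extended) both apply. Since $g$ is spectrally localized, it is in $D(A^k)$ for every $k$, so $g\in L^2_\sigma$ and the identity $\nabla g=\nabla A^{-1/2}\,\big(\sqrt A\,\Phi_j(\sqrt A)\big)\,g$ holds in $L^2$. It then transfers to the $L^p$ estimate, using density of $L^2\cap L^p$ and the consistency of the spectral multipliers across $p$ given by the kernel construction behind Proposition~\ref{prop:0726-2}.

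Finally, one must identify the $L^p$ sum $\sum_j\nabla\phi_j(\sqrt A)f$ with $\nabla f$. Pair against $\varphi\in C_0^\infty$ and pass to the limit using the convergence of Lemma~\ref{lem:0725-5}. This requires $\operatorname{div}\varphi$, projected by $\mathbb P$, to act as a test function, which is where the definition of $\mathbb P$ on $\mathcal Z'_\sigma$ enters.
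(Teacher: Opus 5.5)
Your argument is correct and is essentially the paper's. The paper writes $\nabla\phi_j(\sqrt A)f=\nabla e^{-tA}\bigl(e^{tA}\Phi_j(\sqrt A)\bigr)\phi_j(\sqrt A)f$ with $t=2^{-2j}$ and applies Proposition~\ref{prop:0724-2} (itself proved through $\nabla A^{-1/2}$) together with Proposition~\ref{prop:0726-2}, whereas you factor directly as $\nabla A^{-1/2}\cdot\sqrt A\,\Phi_j(\sqrt A)$. Your identification of $\sum_j\nabla\phi_j(\sqrt A)f$ with $\nabla f$, which the paper leaves implicit, can be shortened: since the spectrum of $A$ is bounded away from $0$, $\dot B^1_{p,1}\hookrightarrow\dot B^0_{p,1}\hookrightarrow L^p_\sigma$, so the Littlewood--Paley series converges to $f$ in $L^p$ and no projected test fields are needed.
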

\begin{pf}
We write 
\[
\nabla \phi_j(\sqrt{A})f 
= \nabla e^{-tA} \big( e^{tA}\Phi_j(\sqrt{A}) \big) \phi_j(\sqrt{A})f, 
\quad \text{with } t = 2^{-2j}
\]
and apply Propositions~\ref{prop:0724-2} and \ref{prop:0726-2} to 
$\nabla e^{-tA} $ and $ e^{tA}\Phi_j(\sqrt{A})$, respectively, to have that 
\[
\sum_{j \in \mathbb Z}\| \nabla \phi_j(\sqrt{A})f \|_{L^p}
\leq C \sum _{j \in \mathbb Z} 2^j \| \phi_j(\sqrt{A}) f \|_{L^p},
\]
which proves the inequality of Proposition~\ref{prop:0724-3}. 
\end{pf}

\section{Proof of Theorem~\ref{thm}}

Let $d < p < \infty$. 
We introduce the norm $\| \cdot \|_Y$ defined by 
\[
\| u \|_Y := 
\sup_{t > 0} \| u(t) \|_{\dot B^{-1+\frac{d}{p}}_{p,\infty}}
+ \sup _{t>0} t^{\frac{1}{2}(1-\frac{d}{p})} \| u(t) \|_{L^p} . 
\]
We explain the idea of the proof of the existence of global solutions, 
including some discussion for local solutions. 

The local existence of solutions can be established using the Banach fixed-point theorem (also known as the contraction mapping theorem). We consider the problem in a time interval $[0, T]$ and seek a function $u$ that satisfies the given initial conditions and the integral equation. 
By constructing an appropriate function space and showing that the mapping involved is a contraction, we can guarantee the existence of a local solution.

To obtain a global solution, we use a priori estimates derived from the norm $\| \cdot \|_Y$. This norm controls both the $\dot B^{-1+\frac{d}{p}}_{p,\infty}$ norm and the $L^p$ norm of the solution, ensuring that the solution exists for the entire interval $[0,\infty)$.

Here, we will present the linear and nonlinear estimates related to this 
proof as Step~1 and Step~2, respectively. 
We only present the key estimates and omit the details of applying the fixed point theorem, as the procedure is standard.

\vskip3mm

\noindent 
{\bf Step 1. }
We consider the estimate of the linear term. 
From Proposition~\ref{prop:0726-3} 
and the embedding $\dot B^0_{p,1} \hookrightarrow L^p$, we have  
\[
\| e^{ -tA}u_0 \|_{Y} 
\leq C \| u_0 \|_{\dot B^{-1+\frac{d}{p}}_{p,\infty}} ,
\]
which is enough for the linear part of the proof 
of the existence of global solutions. 

For the existence of local solutions when $q < \infty$, we use the following convergence:
\begin{equation}\label{0829-1}
\lim_{T\to 0}\sup_{t\in(0,T)}t^{\frac{1}{2}(1-\frac{d}{p})}  
 \| e^{-tA}u_{0} \|_{L^p} = 0 . 
\end{equation}
The proof of this is achieved by using the boundedness:
\[
\sup_{t\in(0,T)}t^{\frac{1}{2}(1-\frac{d}{p})}  
 \| e^{-tA}u_{0} \|_{L^p} 
 \leq C \| u_0 \|_{\dot B^{-1+\frac{d}{p}}_{p,\infty}},
\]
and the following approximation of the initial data:
\[
u_{0,N} = \sum_{|j| \leq N} \phi_j(\sqrt{A})u_0.
\]
Since $q < \infty$, it follows that $u_{0,N}$ converges to $u_0$ in 
$\dot B^{-1+\frac{d}{p}}_{p,q}$. 
For the second part of $\| \cdot \|_Y$, we observe that 
\[
\lim_{T\to 0}\sup_{t\in(0,T)}t^{\frac{1}{2}(1-\frac{d}{p})}  
 \| e^{-tA}u_{0,N} \|_{L^p} = 0 , \quad \text{for each } N.
\]
Thus, the proof of \eqref{0829-1} is complete.

When $q = \infty$, instead of the convergence \eqref{0829-1}, 
we consider replacing by the following smallness condition:  
\[
\limsup_{T\to 0}\sup_{t\in(0,T)}t^{\frac{1}{2}(1-\frac{d}{p})}  
 \| e^{-tA}u_{0} \|_{L^p} \ll 1 .
\]

\vskip3mm 

\noindent 
{\bf Step 2. }
We establish the following estimate for the nonlinear term: 
\[
\Big\| 
\int_0^t e^{-(t-\tau)A} \mathbb P {\rm div \,} (u \otimes u) ~d\tau
\Big\|_{Y} 
\leq C \| u \|_{Y}^2 .
\]
Instead of the original norm of the Besov space, 
we  employ the representation via duality. 
To apply Proposition~\ref{prop:0724-1}, 
for every $g \in \dot B^{1-\frac{d}{p}}_{p',1}$, we consider 
the coupling of the nonlinear term (see subsection~\ref{proj_nonlinear}) and $g$ and write 
\[
\begin{split}
\Big| {}_{\mathcal Z'_\sigma} 
\langle  
  \int_0^t e^{-(t-\tau)A} \mathbb P {\rm div \,} (u \otimes u) ~\mathrm{d}\tau, 
g \rangle _{\mathcal Z_\sigma} 
\Big| 
= 
& 
\Big| 
\int_0^t 
  \int_{\Omega} u \otimes u \cdot 
  \overline{ {\rm div \, } \mathbb P e^{-(t-\tau) A} g }
  ~\mathrm{d}x\,\mathrm{d}\tau 
\Big| 
\\
\leq & 
\int _0^t \| u \otimes u \|_{L^{\frac{p}{2}}} 
 \|{\rm div \, } \mathbb P e^{-(t-\tau) A} g\|_{L^{(\frac{p}{2})'}}
 \mathrm{d}\tau 
\\
\leq & \| u \|_Y^2 
\int _0^t \tau ^{-\frac{1}{2}(1-\frac{d}{p}) \cdot 2} 
 \|{\rm div \, } \mathbb P e^{-(t-\tau) A} g\|_{L^{(\frac{p}{2})'}}
 \mathrm{d}\tau .
\end{split}
\]
We know that $\mathbb P e^{-(t-\tau) A} g = e^{-(t-\tau) A} g$, 
the derivatives are bounded by the Besov norm with additional regularity 
from Proposition~\ref{prop:0724-3},  
and 
the semigroup estimate from $L^{p'}$ to $L^{(\frac{p}{2})'}$ from 
Proposition~\ref{prop:0724-2}. 
These imply that for every $t > 0$, 
\[
\begin{split}
\int _0^t \tau ^{-\frac{1}{2}(1-\frac{d}{p}) \cdot 2} 
 \|{\rm div \, } \mathbb P e^{-(t-\tau) A} g\|_{L^{(\frac{p}{2})'}}
 \mathrm{d}\tau 
\leq & 
C \int _0^t \tau ^{-\frac{1}{2}(1-\frac{d}{p}) \cdot 2} 
 \| e^{-(t-\tau) A} g\|_{\dot B^1_{(\frac{p}{2})',1}}
 \mathrm{d}\tau 
\\
\leq
& C\int_0^t \tau ^{-1+\frac{d}{p}} (t-\tau) ^{-\frac{d}{2p} -\frac{d}{2}(\frac{1}{p'}-\frac{1}{(\frac{p}{2})'})} 
 \| g \| _{\dot B^{1-\frac{d}{p}}_{p',1}}\mathrm{d}\tau 
\\
\leq
& C
 \Big( \int_0^1 \tau ^{-1+\frac{d}{p}} (1-\tau) ^{-\frac{d}{p}} \mathrm{d}\tau 
 \Big) \| g \| _{\dot B^{1-\frac{d}{p}}_{p',1}} . 
\end{split}
\]
It follows from the two estimates above and Proposition~\ref{prop:0724-1} that 
\[
\Big\| \int_0^t e^{-(t-\tau)A} \mathbb P {\rm div \,} (u \otimes u) ~d\tau
\Big\|_{\dot B^{-1+\frac{d}{p}}_{p,\infty}} 
\leq C \| u \|_Y ^2 . 
\]

We now consider the second part of $\| \cdot \|_Y$. 
We apply the semigroup estimate 
from $L^{\frac{p}{2}}$ to $L^p$ along with the first derivative 
provided in Proposition~\ref{prop:0724-2}, 
which implies that 
\[
\begin{split}
t^{\frac{1}{2}(1-\frac{d}{p})}
\Big\| \int_0^t e^{-(t-\tau)A} \mathbb P {\rm div \,} (u \otimes u) ~\mathrm{d}\tau
\Big\|_{L^p} 
\leq 
&C t^{\frac{1}{2}(1-\frac{d}{p})}
\int_0^t (t-\tau)^{-\frac{d}{2}(\frac{2}{p}-\frac{1}{p}) - \frac{1}{2}} 
    \| u \otimes u \|_{L^{\frac{p}{2}}} ~\mathrm{d}\tau 
\\
\leq
& C t^{\frac{1}{2}(1-\frac{d}{p})} 
  \int_0^t (t-\tau)^{-\frac{d}{2p}-\frac{1}{2}}
    \tau ^{-\frac{1}{2}(1-\frac{d}{p})\cdot 2} \mathrm{d}\tau 
  \| u \|_Y^2 
\\
  =
& C 
  \Big( \int_0^1 (1-\tau)^{-\frac{d}{2p}-\frac{1}{2}}
    \tau ^{-1+\frac{d}{p}} \mathrm{d}\tau 
  \Big) 
  \| u \|_Y^2 .
\end{split}
\]

\vskip10mm


\vskip3mm 
%
\noindent 
{\bf Conflict of Interest. }
The authors declare that he has no conflict of interest. 

\vskip2mm 

\noindent 
{\bf Data availability. } 
Our manuscript has no associated data. 
%
%

\end{document}